\newtheorem{thm}{Theorem}[section]
\newtheorem{lem}[thm]{Lemma}
\theoremstyle{definition}
\theoremstyle{remark}
\newtheorem{rem}[thm]{Remark}
\numberwithin{equation}{section}
\def\cl#1#2{{{{C}\!{\ell}}_{{#1},{#2}}}}
\def\dl#1#2{{{{K}}_{{#1},{#2}}}}
\def\conj#1#2{{{#1}^{(#2)}}}
\def\proj#1#2{{\langle{#1}\rangle}_{#2}}
\def\dlsub#1#2#3{{{{K}}^{#3}_{{#1},{#2}}}}
\def\va{\triangle}
\def\firstn#1{\{1,2,\hdots,{#1}\}}
\def\firstnwithzero#1{\{0,1,2,\hdots,{#1}\}}
\def\Mat{{\rm Mat}}
\def\R{{\mathbb R}}
\def\C{{\mathbb C}}
\def\H{{\mathbb H}}
\def\N{{\mathbb N}}
\def\Z{\mathbb{Z}}
\begin{document}

%
%
%
%
%
%
%
%

\title[Commutative Analogues of Clifford Algebras]{On Commutative Analogues of Clifford \\ Algebras and Their Decompositions}

\begin{abstract}
We investigate commutative analogues of Clifford algebras --- algebras whose generators square to $\pm1$ but commute, instead of anti-commuting as they do in Clifford algebras. We observe that commutativity allows for elegant results. We note that these algebras generalise multicomplex spaces --- we show that a commutative analogue of Clifford algebra is either isomorphic to a multicomplex space or to `multi split-complex space' (space defined just like multicomplex numbers but uses split-complex numbers instead of complex numbers). We do a general study of commutative analogues of Clifford algebras and use tools like operations of conjugation and idempotents to give a tensor product decomposition and a direct sum decomposition for them. Tensor product decomposition follows relatively easily from the definition. For the direct sum decomposition, we give explicit basis using new techniques.
\end{abstract}

\author[H. Sharma]{Heerak Sharma}
\address{
Indian Institute of Science Education and Research (IISER), Pune\\
Pune\\
India}

\email{heerak.sharma@students.iiserpune.ac.in}

\author[D. Shirokov]{Dmitry Shirokov}
\address{
HSE University\\
Moscow 101000\\
Russia
\medskip}
\address{
and
\medskip}
\address{
Institute for Information Transmission Problems of the Russian Academy of Sciences \\
Moscow 127051 \\
Russia}
\email{dm.shirokov@gmail.com}


\subjclass{Primary 15A69; Secondary 15A66}
\keywords{Clifford algebra, Commutative analogue, Direct sum decomposition, Multicomplex space, Tensor product decomposition}
\date{\today}

\label{page:firstblob}
\maketitle

\section{Introduction}
The generators of Clifford algebras anti-commute. We find it natural to consider an analogue of Clifford algebras in which the generators commute. The precise definition of commutative analogues of Clifford algebras is given in Section \ref{section: defns}. N. G. Marchuk in \cite{MarchukDemo, MarchukClassification} looked at tensor product decomposition of Clifford algebras and classified tensor products of Clifford algebras into a few categories and required the need for commutative analogues of Clifford algebras. G. S. Staples, in his books \cite{StaplesBook1,StaplesBook2}, introduces symmetric Clifford algebras, which are exactly commutative analogues of Clifford algebras. He also introduces zeons, which can be thought of as commutative analogues of the exterior (Grassmann) algebras $\cl{0,0}{n}$, and uses them to solve combinatorial problems. Study of commutative analogues of Clifford algebras can aid study of Clifford algebras because one can realise commutative analogues of Clifford algebras as subalgebras of some Clifford algebras. Check Proposition 2.2 in \cite{StaplesBook2}. In particular, multicomplex spaces can be thought of as a subalgebra of Clifford algebras, check Proposition 3.15 in \cite{MSThesis}. Multicomplex spaces and commutative quaternions are two well-known examples of commutative analogues of Clifford algebras.
 
Multicomplex spaces were first introduced by Segre in 1892 in his seminal paper \cite{Segre}. They have been studied extensively and are rich with beautiful results \cite{Price, Shapiro, MulticomplexIdeals, HolomorphyInMC, MulticomplexHyperfunctions, DifferentialEQnInMC}. The most well-known example of multicomplex spaces are the bicomplex numbers which have been studied extensively \cite{BCHolFunction, LunaBCAndElFun,FuncAnalBC,SingularititesBCHolFunc, GeometryIdentitityTheorems, FractalTypeSets, CauchyTypeIntegral, SingulFuncBCVar, BCHolFunCal, BicomplexHyperfunctions}. More interest has been present towards studying analysis over multicomplex spaces. Bicomplex spaces and multicomplex spaces in general find applications in a lot of areas, for instance check \cite{SlicesOfMandelbrodSet,FrenchMSThesis,MSThesis,PlatonicSolids} for applications in studying Mandelbrod set, \cite{BCQM1,BCQM2,MCSchrodingerEQn} for applications in quantum physics and \cite{FinDImBCHilbertSpace,BCRiemannZetaFunc,MCRiemannZetaFunc,BCPolygammaFunction} for some other applications. In Section \ref{discussion}, we discuss commutative analogues of Clifford algebras, multicomplex spaces and related structures and discuss how some results from this paper may aid study of multicomplex spaces.

Commutative quaternions are isomorphic to bicomplex numbers and have been well studied. F. Catoni et al. in \cite{Catoni1} and \cite{Catoni2} discussed commutative quaternions and in general some commutative hypercomplex numbers. H.H. Kösal and M. Tosun in \cite{CommQuatMat} considered matrices with entries from commutative quaternions. Commutative quaternions find applications in a lot of areas, for instance check \cite{ZhangWangVasilevSVD, ZhangJiangVasilevLeastSQ, ZhangGuoWangJiangLeastSQ, ZhangWangVasilevJiangLeastSQ, Isokawa, Kobayashi} for applications in computer science and \cite{Catoni3, ZhangWangVasilevJiangMaxwellEqn} for applications in physics. 

This paper is organised as follows. In Section~\ref{section: defns}, we define the commutative analogues of Clifford algebras and introduce the notations that we will use in this paper. In Section~\ref{section: tpd}, we give a tensor product decomposition for them. In Section~\ref{section: isomorphism classes}, we discuss isomorphisms between different commutative analogues of Clifford algebras and introduce the idea of isomorphism classes. We show that multicomplex spaces are one of the isomorphism classes. In Section~\ref{Operations of conjugation}, we discuss operations of conjugation in these algebras. In Section~\ref{Section: Special basis in K(p,q)}, we introduce a special basis in these algebras which leads to a direct sum decomposition as a direct sum of $\R$s or a direct sum of $\C$s. As a consequence, we recover the idempotents in multicomplex spaces. In Section~\ref{discussion}, we discuss how our work might simplify the study of multicomplex spaces. The conclusions follow in Section ~\ref{section_conclusions}. 

We were inspired to do this study when studying multiplicative inverses in Clifford algebras and therefore we present this paper from an algebraist's viewpoint. Theorems \ref{tensor product decomposition theorem}, \ref{thm: isomorphism classes}, \ref{Theorem: isomorphism between K_{n,0} and direct sum of Rs},  \ref{Theorem: isomorphism between K_{0,n} and direct sum of Cs} have been presented for the first time. New techniques regarding idempotents in multicomplex spaces are presented in Section \ref{subsection: special basis in K_{0,n}} as a sequence of lemmas and theorems (Lemma \ref{Lemma: first properties of special basis in K(0,n)} to Theorem \ref{Theorem: isomorphism between K_{0,n} and direct sum of Cs}). These made it possible to present explicit basis for the decompositions. These techniques can be interesting in themselves for the development of the theory.

\section{Definitions, notations and first examples}\label{section: defns}
\subsection{Defining $\dl{p}{q}$}
Let $p,q \in \Z_{\ge0}$ and $n := p+q$. The \textit{commutative analogue of Clifford algebra}, denoted by $\dl{p}{q}$\footnote{We denote commutative analogues of Clifford algebras by $\dl{p}{q}$ -- a notation introduced by N. G. Marchuk in \cite{MarchukClassification}.}, is a real\footnote{One could define the algebra over any field. We, in this paper, consider only algebras over the real numbers.} associative algebra with unity\footnote{An associative algebra $A$ over a field $\mathbb{K}$ is a vector space over $\mathbb{K}$ along with a bilinear, associative multiplication $\cdot: A\times A \to A$. The tag `with unity' means that the algebra admits multiplicative identity.} generated by the generators $\{e_1, e_2, \hdots, e_n\}$ which obey the following multiplication:
\begin{align*}
&{e_i}^2 = 1, \;\; \text{if} \;\; {1\le i \le p};\\
&{e_i}^2 = -1, \;\; \text{if} \;\; {p+1 \le i \le p+q =n};\\
&{e_i}{e_j} = {e_j}{e_i} \;\; \forall \;\; i,j \in \firstn{n}.
\end{align*}
The multiplication is then extended to all of $\dl{p}{q}$ via bi-linearity. Note that because the generators commute, the algebra that they generate, $\dl{p}{q}$, is also commutative.\\

Let $l\in\N$. For $1 \le i_1 < i_2 < \cdots < i_l \le n$ define $e_{i_1 i_2 \ldots i_l} := e_{i_1}e_{i_2} \cdots e_{i_l}$. Also, define $e_{\{\}} := 1$ and $e_{\{i_1, i_2, \ldots, i_l\}} := e_{i_1 i_2 \ldots i_l}$. A general element in $\dl{p}{q}$ is of the form:
\begin{multline*}
    U = u + \sum_{1 \le i \le n} u_ie_i + \!\!\sum_{1 \le i_1 < i_2 \le n}\!\! u_{i_1 i_2}e_{{i_1} {i_2}} + \quad\cdots\quad + \!\!\!\!\!\!\!\!\!\!\\ \sum_{1 \le i_i < i_2 < \cdots < i_{n-1} \le n}\!\!\!\!\!\!\!\!\!\! u_{i_1 i_2 \ldots i_{n-1}}e_{i_1 i_2 \ldots i_{n-1}} + u_{12\ldots n}e_{12\ldots n}.
\end{multline*}

\subsection{First examples} 
\begin{enumerate}
    \item It is easy to check that $\dl{1}{0} \cong \R\oplus\R$ - the split-complex numbers and $\dl{0}{1} \cong \C$ - the complex numbers\footnote{Comment: These are also same as the Clifford algebra $\cl{1}{0}$ and $\cl{0}{1}$. These along with $\cl{0}{0} \cong \R$ are the only Clifford algebras that are commutative.}. 

    \item The algebra $\dl{1}{1}$ is known as the algebra of commutative quaternions \cite{Catoni1, Catoni2, CommQuatMat}.

    \item The subalgebra $S = {\rm{Span}_\R}\{1,e_{1256},e_{1346},e_{2345}\} \subseteq \cl{6}{0}$ described in \cite{A.Acus} is isomorphic to $\dl{2}{0}$. This becomes apparent once we define $g_1 = e_{1256}, g_2 = e_{1346}$. Then $g_1 g_2 = g_2 g_1 = -e_{2345}$ and $g_1^2 = g_2^2 = +1$. Therefore, the subalgebra $S\cong\dl{2}{0}$ the isomorphism being identification of $g_1,g_2$ as generators of $\dl{2}{0}$. 
    
    In general, using Proposition 2.2 in \cite{StaplesBook2}, $\dl{p}{q}$s can be realised as a sub-algebras of the Clifford algebra $\cl{p+2q}{p}$ as mentioned before.
     
    \item It will be shown subsequently in Section \ref{Subsection: Example of tensor product decomposition} that multicomplex spaces are $\dl{0}{n}$ for $n\in\N$.  
\end{enumerate}

\subsection{Grades and projections}
$\dl{p}{q}$ as a vector space is the $2^n$-dimensional space
$$\mathrm{Span}_\R\{e_A \;|\; A \subseteq \firstn{n} \}.$$ The elements $\{e_A \;|\; A \subseteq \firstn{n} \}$ form a basis for $\dl{p}{q}$.

Let $A\subseteq\firstn{n}$. By $|A|$ we denote the number of elements in $A$. For $k \in \firstnwithzero{n}$, we define \textit{subspace of grade $k$}, denoted by $\dlsub{p}{q}{k}$, as 
$$\dlsub{p}{q}{k} := {\rm{Span}}_{\R}\{e_A \;|\; A \subseteq \firstn{n} \text{ and } |A| = k\},$$
i.e., $\dlsub{p}{q}{k}$ is $\R$-$\rm{span}$ of basis elements of length k.

We define \textit{projection maps} $\proj{.}{k}: \dl{p}{q} \to \dlsub{p}{q}{k}$ by $U \mapsto \proj{U}{k}$. These are linear maps which return the grade $k$ part of $U$. In general, 
$$U = \sum_{k=0}^{n}{\proj{U}{k}}.$$
 
\section{Tensor product decomposition}\label{section: tpd}
An immediate consequence of commutativity is that one can decompose any $\dl{p}{q}$ as tensor products of `$p$' $\dl{1}{0}$s and `$q$' $\dl{0}{1}$s. We start by defining tensor product of algebras.

\subsection{Tensor product of algebras}
Let $A$ and $B$ be two associative algebras having multiplicative identities, dimensions, basis as $1_A$ and $1_B$, $n$ and $m$, $\{a_1, \hdots, a_n\}$ and $\{b_1 ,\hdots, b_m\}$ respectively. Because $A$ and $B$ are vector spaces, we consider their tensor product $A\otimes\footnote{We will consider tensor products over the field of real numbers in this paper.} B$ which is the $n\cdot m$-dimensional vector space
$$\mathrm{Span}\{{a_i\otimes b_j \; | \; i \in \firstn{n}, j \in \firstn{m}}\}.$$
One can then make $A\otimes B$ into an associative algebra with identity if one defines the multiplication in $A \otimes B$ `naturally' as 
\begin{equation*}
    (a_{i_1}\otimes b_{j_1})\cdot(a_{i_2} \otimes b_{j_2}) := {a_{i_1}a_{i_2}} \otimes {b_{j_1} b_{j_2}}
\end{equation*} 
for $i_1, i_2 \in \firstn{n}, j_i, j_2 \in \firstn{m}$ and extends this multiplication bi-linearly to all of $A\otimes B$. The identity element of $A\otimes B$ is $1_A\otimes 1_B$.

Using similar idea, we can define tensor product algebra of $n$ algebras $A_1, A_2, \hdots, A_n$ which is $A_1 \otimes A_2\otimes \cdots \otimes A_n$ as a vector space with the multiplication defined as
\begin{equation*}
    (a_1\otimes a_2\otimes \cdots \otimes a_n)\cdot(b_1\otimes b_2\otimes \cdots \otimes b_n) := ({a_1}{b_1})\otimes({a_2}{b_2})\otimes \cdots \otimes ({a_n}{b_n})
\end{equation*}
where $a_i, b_i$ are elements of $A_i$ for all $i \in \firstn{n}$ and then extended bi-linearity to all of $A_1 \otimes A_2\otimes \cdots \otimes A_n$.

Also, it follows that if $A$ and $B$ are commutative, then $A\otimes B$ is also commutative. Let $a\otimes b$, $a^\prime \otimes b^\prime$ $\in A\otimes B$ then:
$$(a\otimes b)\cdot(a^\prime \otimes b^\prime) = {(a a^\prime)} \otimes {(b b^\prime)} = {(a^\prime a)} \otimes {(b^\prime b)} = (a^\prime \otimes b^\prime) \cdot (a\otimes b)$$
showing that if $A$ and $B$ are commutative, then so is $A\otimes B$. Similar argument extends to the tensor product algebra of $n$ algebras i.e., tensor product of $n$ commutative algebras is another commutative algebra. 
Also, since tensor product is also commutative and associative, i.e., $(A \otimes B) \otimes C \cong A \otimes (B \otimes C)$ and $A\otimes B \cong B \otimes A$, it allows us to express the tensor product of $n$ algebras in any order and without need of any parentheses.

\subsection{The tensor product decomposition of $\dl{p}{q}$}\label{Subsection: tensor product decomposition result}

\begin{thm}\label{tensor product decomposition theorem}
    It is the case that:\footnotemark
    \begin{equation}\label{tensor product decomposition result}
        \dl{p}{q} \cong \underbrace{\dl{1}{0} \otimes \dl{1}{0} \cdots \otimes \dl{1}{0}}_{p \; \text{times}} \otimes \underbrace{\dl{0}{1} \otimes \dl{0}{1} \cdots \otimes \dl{0}{1}}_{q \; \text{times}}.
    \end{equation}
\end{thm}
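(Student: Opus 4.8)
The plan is to write down an explicit algebra isomorphism. Denote by $B$ the algebra on the right-hand side of \eqref{tensor product decomposition result}, and for each $k \in \firstn{n}$ let $f_k \in B$ be the elementary tensor whose $k$-th tensor factor is the generator of the corresponding copy of $\dl{1}{0}$ when $k \le p$ (respectively of $\dl{0}{1}$ when $k > p$), and all of whose other factors equal $1$. First I would observe that $f_1, \ldots, f_n$ satisfy exactly the defining relations of $\dl{p}{q}$: they pairwise commute, which is immediate either from the fact recorded above that a tensor product of commutative algebras is commutative, or directly because both $f_k f_l$ and $f_l f_k$ equal the elementary tensor carrying a generator in slots $k$ and $l$; and, since squaring a simple tensor acts factor by factor, $f_k^2 = 1$ in $B$ for $k \le p$ while $f_k^2 = -1$ in $B$ for $k > p$, because the generator of $\dl{1}{0}$ squares to $+1$ and that of $\dl{0}{1}$ squares to $-1$.

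Because $\dl{p}{q}$ is by definition the unital associative $\R$-algebra presented by the generators $e_1, \ldots, e_n$ subject to precisely these relations, there is a unique algebra homomorphism $\varphi \colon \dl{p}{q} \to B$ with $\varphi(e_k) = f_k$ for all $k$; in particular $\varphi(1) = 1$ and $\varphi(e_A) = \prod_{k \in A} f_k$ for every $A \subseteq \firstn{n}$ (the product being order-independent since the $f_k$ commute).

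Next I would check surjectivity. As a vector space $B$ is spanned by the simple tensors $x_1 \otimes \cdots \otimes x_n$ in which each $x_k$ is either $1$ or the generator of the $k$-th factor; such a tensor is exactly $\prod_{k \in A} f_k = \varphi(e_A)$, where $A$ collects the slots in which $x_k$ is a generator. Hence $\varphi$ is onto. A dimension count then concludes: $\dim_\R \dl{p}{q} = 2^n$ since the $e_A$ with $A \subseteq \firstn{n}$ form a basis, whereas $\dim_\R B = 2^p \cdot 2^q = 2^n$ because each factor is $2$-dimensional and dimension is multiplicative under tensor product. A surjection of vector spaces of equal finite dimension is bijective, so $\varphi$ is an isomorphism of algebras.

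I do not anticipate a real obstacle: the theorem is essentially the statement that commutativity lets the generators be distributed into independent one-generator blocks. The only point requiring a little care is the well-definedness of $\varphi$. If one wishes to avoid invoking the presentation of $\dl{p}{q}$, one can instead define $\varphi$ on the basis by $e_A \mapsto \prod_{k \in A} f_k$ and verify multiplicativity directly from the identity $e_A e_B = (-1)^{|A \cap B \cap \{p+1, \ldots, n\}|}\, e_{A \triangle B}$ that holds in $\dl{p}{q}$; the right-hand side matches the factorwise product of the corresponding simple tensors in $B$, and since $\varphi$ then visibly sends a basis bijectively onto a basis, both surjectivity and injectivity come for free.
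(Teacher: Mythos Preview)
Your proof is correct and follows essentially the same route as the paper's first proof: both write down the map sending $e_k$ to the elementary tensor carrying a generator in the $k$-th slot, though you supply more detail (well-definedness via the presentation, surjectivity, and a dimension count) where the paper simply declares the identification to be an isomorphism. The paper also includes a second, inductive proof emphasizing the recursive structure $\dl{p}{q} \cong \dl{p'}{q'} \otimes \dl{1}{0}$ or $\dl{p'}{q'} \otimes \dl{0}{1}$, which you do not give but which is not needed for the result.
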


\footnotetext{This decomposition is unique up to permutation of the productants in the tensor product on the right-hand side of the result.}

\noindent This shows that $\dl{p}{q}$ sort of decouples as `$p$' $\dl{1}{0}$s and `$q$' $\dl{0}{1}$s.

It follows from the tensor product decomposition that
\begin{equation}\label{Tensor decomposition Property 1}
\dl{p_1}{q_1}\otimes\dl{p_2}{q_2} \cong \dl{p}{q} \text{ when } p_1 + p_2 = p,\quad q_1 + q_2 = q.
\end{equation}

This follows directly if one decomposes the $\dl{p}{q}$s on the left-hand side and the right-hand side of \eqref{Tensor decomposition Property 1} into tensor products of $\dl{1}{0}$ and $\dl{0}{1}$. This means that tensor product of two commutative analogues of Clifford algebras is another commutative analogue of Clifford algebra. The analogous situation for Clifford algebras is different. Tensor product of two Clifford algebras is not necessarily another Clifford algebra. Please check \cite{MarchukClassification} for more details.

\subsection{An example: multicomplex spaces}\label{Subsection: Example of tensor product decomposition}
Let us consider $\dl{0}{2}$. A general element of $\dl{0}{2}$ is $a + {a_1}{e_1} + {a_2}{e_2} + {a_{12}}{e_{12}}$. Now, note that:
\begin{equation*}
a + {a_1}{e_1} + {a_2}{e_2} + {a_{12}}{e_{12}} = (a + {a_1}{e_1}) + {e_2}({a_2} + {a_{12}}{e_1}) = x + y{e_2}
\end{equation*}
where $x,y \in \dl{0}{1}$. This is exactly the definition of bi-complex numbers \cite{Price}!\footnote{Usually, the generators in bi-complex space and more generally in a multicomplex space are denotes by $i_1,i_2,\hdots,i_n$. Since we are came across $\dl{p}{q}$s from Clifford algebras, we use the symbols $e_1,e_2,\hdots,e_n$ to denote generators.}

We can further generalise to a multicomplex space with $n$ generators. The definition of multicomplex spaces in literature uses the following recursive definition: $\mathbb{M}_n = \{x+ye_n\;|\;x,y\in\mathbb{M}_{n-1}, e_n^2 = -1\}$, $\mathbb{M}_0 = \R$. Note that $\mathbb{M}_1 \cong \dl{0}{1}$ because both of them are exactly the complex numbers. The definition of $\mathbb{M}_n$ is exactly like extending scalars in $\mathbb{M}_1$ from $\R$ to $\mathbb{M}_{n-1}$. Since one can interpret the tensor product as extension of scalars, namely, if $R$ is a subring of $S$, $M$ is an $R$ module, then $S\otimes_R M$ is an $S$ modules containing $M$.
This allows one to interpret $\mathbb{M}_{n-1} \otimes \mathbb{M}_{1}$ as extension of scalars in $\mathbb{M}_1$ from $\R$ to $\mathbb{M}_{n-1}$. Recursively, one can express $\mathbb{M}_{n-1}$ as $\mathbb{M}_{n-2}\otimes\mathbb{M}_{1}$ and so on to get that $\mathbb{M}_n \cong \underbrace{\mathbb{M}_1\otimes\mathbb{M}_1\otimes\cdots\otimes\mathbb{M}_1}_{n \text{ times}}$ and since $\mathbb{M}_1 \cong \dl{0}{1}$, we have that $\mathbb{M}_n \cong \underbrace{\dl{0}{1}\otimes\dl{0}{1}\otimes\cdots\otimes\dl{0}{1}}_{n \text{ times}} \cong \dl{0}{n}$. 

In general, for $\dl{p}{q}$s, the extension of scalars is captured by \eqref{Tensor decomposition Property 1} where $\dl{p}{q}$ is isomorphic to either $\dl{p^\prime}{q^\prime}\otimes\dl{1}{0}$ or $\dl{p^\prime}{q^\prime}\otimes\dl{0}{1}$ where $p^\prime + q^\prime = p+q-1$. This extension of scalars is the intuition behind the proof of Theorem \ref{tensor product decomposition theorem} but first, we first give a more explicit proof for the decomposition.

\subsection{Proofs of the tensor product decomposition}\label{subsection: proof of tensor product decomposition}
\begin{proof}[Proof of Theorem \ref{tensor product decomposition theorem}]The identification:
    $$1 \mapsto 1\otimes 1 \otimes \cdots \otimes 1,$$
    $$e_1 \mapsto {e_1^{a_1}}\otimes 1 \otimes \cdots \otimes 1,$$
    $$e_2 \mapsto 1\otimes {e_1^{a_2}} \otimes \cdots \otimes 1,$$
    $$\vdots$$
    $$e_j \mapsto \underbrace{1\otimes 1 \otimes \cdots \otimes 1}_{j-1 \; \text{times}} \otimes {e_1^{a_j}} \otimes 1 \cdots \otimes 1,$$
    $$\vdots$$
    $$e_n \mapsto 1\otimes 1 \otimes \cdots \otimes {e_1^{a_n}}$$
    is an isomorphism between $\underbrace{\dl{1}{0} \otimes \dl{1}{0} \cdots \otimes \dl{1}{0}}_{p \; \text{times}} \otimes \underbrace{\dl{0}{1} \otimes \dl{0}{1} \cdots \otimes \dl{0}{1}}_{q \; \text{times}}$
    
    \noindent and $\dl{p}{q}$ where $e_1^{a_l}$ denotes the generator of the $l$th factor ($\dl{1}{0}$ or $\dl{0}{1}$ depending on whether the generator squares to $+1$ or $-1$) in the right hand side of the tensor product decomposition.
\end{proof}

We also present another proof using mathematical induction. The aim of giving this other proof is to highlight the `onion like' structure of $\dl{p}{q}$s that was described in Section \ref{Subsection: Example of tensor product decomposition}.
 
\begin{proof}[Another proof of Theorem \ref{tensor product decomposition theorem}]
    We prove \eqref{tensor product decomposition result} using mathematical induction on $n$. Our base case is $n=1$ and there is nothing to prove in the case $n=1$. 
    
    Our induction hypothesis is that the result \eqref{tensor product decomposition result} holds for $n = m$. Assuming this, we now show that if the result holds for $n = m$, then it also holds for $n = m+1$.
    
    We start by observing that a general element of $\dl{p}{q}$, where $p+q = m+1$ can be written as $x + y{e_n}$ where $x,y \in \dl{p^\prime}{q^\prime}$ where $p^\prime + q^\prime = m$ just like what we observed in Section \ref{Subsection: Example of tensor product decomposition}. Now, one can interpret this as extension of scalars from $\R$ to $\dl{p^\prime}{q^\prime}$, thus proving that $\dl{p}{q} \cong \dl{p^\prime}{q^\prime} \otimes \dl{1}{0} \text{ or } \dl{p^\prime}{q^\prime} \otimes \dl{0}{1}$ according to if $e_n^2 = +1 \text{ or } -1$.
    
    Since by our induction hypothesis $\dl{p^\prime}{q^\prime}$ can be decomposed as a tensor product of `$p^\prime$' $\dl{1}{0}$s and `$q^\prime$' $\dl{0}{1}$s, $\dl{p}{q}$ can be decomposed as a tensor product of `$p$' $\dl{1}{0}$s and `$q$' $\dl{0}{1}$s.
\end{proof}

\section{Isomorphisms between different $\dl{p}{q}$}\label{section: isomorphism classes}
The idea of isomorphism classes has been introduced in \cite{MarchukDemo}. We start by recalling it.

Let $A_{p,q}$ be an algebra for all $p,q\in\Z_{\ge0}$ and let $n := p+q$. Then it might be the case that for some fixed $n$, $A_{p,q} \cong A_{p^\prime,q^\prime}$ for some $p,q,p^\prime, q^\prime$ such that $p+q = p^\prime + q^\prime = n$. Therefore, we can divide the collection $\{A_{p,q} \;|\; p+q=n\}$ into different ``classes'' such that members in a class are isomorphic to each other and are not isomorphic to members of a different class.

We give the example of Clifford algebras to make the idea clear.
\subsection{Example: Isomorphism classes in Clifford algebras}
First, consider an example from \cite{MarchukDemo}. Consider Clifford algebras $\cl{p}{q}$ such that $p+q=2$. There are three such algebras:
$$\cl{2}{0},\qquad \cl{1}{1},\qquad \cl{0}{2}.$$
Now, $\cl{2}{0}\cong\cl{1}{1}$. This is because one can think of $e_{12}$ as a generator along with $e_1$ in $\cl{2}{0}$. Since $e_1e_{12} = e_2$, we see that $e_1, e_{12}$ generate all of $\cl{2}{0}$. Also, since $e_{12}^2 = -1$ and $e_1^2=1$, $e_1,e_{12}$ generate a Clifford algebra isomorphic to $\cl{1}{1}$. Therefore, $\cl{2}{0}\cong\cl{1}{1}$. One can check that this technique doesn't work to show that $\cl{2}{0}$ (or $\cl{1}{1}$) $\cong \cl{0}{2}$. One can in fact show that $\cl{2}{0}, \cl{1}{1} \not\cong \cl{0}{2}$. This becomes clear once we consider the isomorphism classes in a general Clifford algebra with $n$ generators using the well-known Cartan--Bott periodicity of 8 theorem:
\begin{thm}[Cartan--Bott periodicity of 8]
    A Clifford algebra $\cl{p}{q}$ is isomorphic to an algebra of matrices as follows: 
    $$\cl{p}{q} \cong
    \left\lbrace
    \begin{array}{ll}
    \Mat(2^{\frac{n}{2}},\R), & \mbox{if $p-q=0, 2 \mod 8$,}\\
    \Mat(2^{\frac{n-1}{2}}, \R)\oplus\Mat(2^{\frac{n-1}{2}}, \R), & \mbox{if $p-q=1 \mod 8$,}\\
    \Mat(2^{\frac{n-1}{2}}, \C), & \mbox{if $p-q=3, 7 \mod 8$,}\\
    \Mat(2^{\frac{n-2}{2}}, \H), & \mbox{if $p-q=4, 6 \mod 8$,}\\
    \Mat(2^{\frac{n-3}{2}}, \H)\oplus\Mat(2^{\frac{n-3}{2}}, \H), & \mbox{if $p-q=5 \mod 8$.}
    \end{array}
    \right.$$
\end{thm}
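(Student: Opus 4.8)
This theorem is classical --- it is the Atiyah--Bott--Shapiro classification --- so in practice one cites it; nonetheless, here is the route I would follow. \textbf{Base cases.} First I would compute the five smallest Clifford algebras directly: $\cl{0}{0}\cong\R$, $\cl{1}{0}\cong\R\oplus\R$, $\cl{0}{1}\cong\C$, $\cl{2}{0}\cong\cl{1}{1}\cong\Mat(2,\R)$ and $\cl{0}{2}\cong\H$. The isomorphism $\cl{2}{0}\cong\cl{1}{1}$ is already established in the excerpt; the rest come from exhibiting explicit representations of the generators (by $2\times 2$ real matrices, or by $i,j\in\H$), checking the relations ${e_i}^2=\pm1$ and $e_ie_j=-e_je_i$, and noting that both sides have real dimension $2^{n}$.

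\textbf{Periodicity isomorphisms.} The engine of the argument is a short list of structural isomorphisms, each proved the same way --- by locating generators of the left-hand algebra inside the right-hand tensor product and then counting dimensions:
\begin{equation*}
\cl{p+1}{q+1}\cong\cl{p}{q}\otimes\Mat(2,\R),\qquad \cl{m+2}{0}\cong\cl{0}{m}\otimes\Mat(2,\R),\qquad \cl{0}{m+2}\cong\cl{m}{0}\otimes\H .
\end{equation*}
For the first, if $\varepsilon_1,\dots,\varepsilon_n$ generate $\cl{p}{q}$ and $E_1,E_2$ generate a copy of $\cl{1}{1}\cong\Mat(2,\R)$, then the $n+2$ elements $\varepsilon_i\otimes E_1E_2$ ($1\le i\le n$), $1\otimes E_1$ and $1\otimes E_2$ satisfy precisely the relations defining $\cl{p+1}{q+1}$; since they generate $\cl{p}{q}\otimes\Mat(2,\R)$ and both algebras have dimension $2^{n+2}$, the induced homomorphism is an isomorphism. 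The other two are proved analogously, the factor $E_1E_2$ being chosen so that conjugation by it reverses the sign of each generator's square --- this is what produces the swap $p\leftrightarrow q$.

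\textbf{Assembling the table.} Iterating the first isomorphism collapses the ``balanced'' part of the signature, reducing every $\cl{p}{q}$ to $\cl{|p-q|}{0}$ or $\cl{0}{|p-q|}$ tensored with a power of $\Mat(2,\R)$. The remaining one-sided algebras are computed recursively from the base cases via the last two isomorphisms, using the standard real tensor identities $\Mat(k,\R)\otimes\Mat(l,\R)\cong\Mat(kl,\R)$, $\Mat(k,\mathbb{A})\otimes\Mat(l,\R)\cong\Mat(kl,\mathbb{A})$ for $\mathbb{A}$ one of $\R,\C,\H$, $\C\otimes\C\cong\C\oplus\C$, $\C\otimes\H\cong\Mat(2,\C)$ and $\H\otimes\H\cong\Mat(4,\R)$. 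One then works through $m=0,1,\dots,7$ (equivalently the eight residues of $p-q$), observes that $\cl{m+8}{0}\cong\cl{m}{0}\otimes\Mat(16,\R)$ so the pattern is $8$-periodic, and records which of the five shapes $\Mat(k,\R)$, $\Mat(k,\C)$, $\Mat(k,\H)$, $\Mat(k,\R)\oplus\Mat(k,\R)$, $\Mat(k,\H)\oplus\Mat(k,\H)$ occurs; in each case the size $k$ is then forced by $\dim_\R\cl{p}{q}=2^{n}$.

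\textbf{The main obstacle.} No single step is difficult, but the bookkeeping is delicate: one must pin down the signs in the periodicity maps so that the constructed elements genuinely square to the prescribed $\pm1$ and pairwise anticommute, and then track carefully how the quaternionic factors combine --- two copies of $\H$ flattening to a real matrix algebra while a lone $\H$ or $\C$ survives --- so that the period comes out exactly $8$ and the five cases land precisely as stated.
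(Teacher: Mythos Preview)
Your outline is the standard Atiyah--Bott--Shapiro argument and is essentially correct. Note, however, that the paper does \emph{not} supply a proof of this theorem: it is merely stated as a well-known classical result to illustrate how the five isomorphism classes of Clifford algebras arise, after which the paper immediately moves on to the (much simpler) classification of the commutative analogues $\dl{p}{q}$. So there is nothing to compare your argument against; the authors are simply citing the result.

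That said, your sketch is sound. The three periodicity isomorphisms you list are the right ones, and your description of how to verify them --- exhibit generators inside the tensor product, check signs and anticommutation, count dimensions --- is exactly how the proof goes. The one place I would tighten is the reduction step: ``iterating the first isomorphism collapses the balanced part'' is fine when $\min(p,q)>0$, but you should say explicitly that this reduces to $\cl{p-q}{0}$ when $p\ge q$ and to $\cl{0}{q-p}$ when $q>p$, so that the residue of $p-q$ modulo $8$ (rather than $|p-q|$) is what governs the answer; this is what makes the cases $p-q\equiv 3$ and $p-q\equiv 7$ land together, and similarly for $4,6$ and $0,2$. Otherwise your bookkeeping remarks are apt, and the tensor identities you invoke ($\C\otimes\C\cong\C\oplus\C$, $\H\otimes\H\cong\Mat(4,\R)$, etc.) are exactly the ones needed.
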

It is then clear that for $n=p+q$, there are 5 isomorphic classes for Clifford algebras, each isomorphism class corresponds to a algebra of matrices over $\R$, $\R\oplus\R$, $\C$, $\H$ and $\H\oplus\H$.

\subsection{Isomorphism classes in $\dl{p}{q}$}
There are two isomorphism classes for commutative analogues of Clifford algebras which follow from the following theorem.
\begin{thm}\label{thm: isomorphism classes}
    Let $p\in\Z_{\ge0},$ $q\in\N$ and $n :=p+q$. Then:
    \begin{enumerate}
        \item $\dl{n}{0} \not\cong \dl{p}{q}$,
        \item $\dl{p}{q} \cong \dl{0}{n}$.
    \end{enumerate}
\end{thm}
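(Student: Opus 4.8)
The plan is to exploit the tensor product decomposition from Theorem~\ref{tensor product decomposition theorem} together with two simple structural invariants: whether the algebra has nilpotent elements (beyond $0$), and the dimension of a maximal subalgebra of a given isomorphism type. For part (2), I would argue that every $\dl{p}{q}$ with $q\geq 1$ is isomorphic to $\dl{0}{n}$. By \eqref{Tensor decomposition Property 1} it suffices to show $\dl{1}{0}\otimes\dl{0}{1}\cong\dl{0}{1}\otimes\dl{0}{1}$, i.e.\ $\dl{1}{1}\cong\dl{0}{2}$, since then one $\dl{1}{0}$ factor can be absorbed into an adjacent $\dl{0}{1}$ factor at a time until all split-complex factors are gone. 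For the base isomorphism $\dl{1}{1}\cong\dl{0}{2}$, I would mimic the Clifford-algebra trick recalled in the excerpt: in $\dl{1}{1}$ with $e_1^2=1$, $e_2^2=-1$, consider the elements $f_1:=e_{12}$ and $f_2:=e_2$. Then $f_1^2=e_1^2e_2^2=-1$, $f_2^2=-1$, they commute, and $f_1f_2=e_{12}e_2=-e_1$ recovers $e_1$, so $\{f_1,f_2\}$ generate all of $\dl{1}{1}$ and satisfy the defining relations of $\dl{0}{2}$; a dimension count ($2^2=4$ on both sides) upgrades the surjection to an isomorphism. Then I would note this local move tensors up: $\dl{1}{1}\otimes\dl{p'}{q'}\cong\dl{0}{2}\otimes\dl{p'}{q'}$ for any $p',q'$, which via \eqref{Tensor decomposition Property 1} gives $\dl{p'+1}{q'+1}\cong\dl{p'}{q'+2}$, and iterating drives $p$ down to $0$.

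For part (1), the task is to produce an invariant distinguishing $\dl{n}{0}$ from any $\dl{p}{q}$ with $q\geq 1$. The cleanest choice is the presence of nonzero nilpotents, equivalently whether the algebra is reduced. I would show $\dl{n}{0}$ is reduced while every $\dl{p}{q}$ with $q\geq1$ is not. For $\dl{n}{0}$, the tensor decomposition gives $\dl{n}{0}\cong\dl{1}{0}^{\otimes n}\cong(\R\oplus\R)^{\otimes n}\cong\R^{\oplus 2^n}$ (a product of copies of $\R$), which has no nonzero nilpotents. For $\dl{p}{q}$ with $q\geq 1$: by part (2) it is isomorphic to $\dl{0}{n}$, and there $e_{1\ldots n}$ or, more transparently, the idempotent-type combinations built from the $e_i$ produce zero divisors that are nilpotent — concretely, once we know $\dl{0}{2}$ is (as will be shown in Section~\ref{subsection: special basis in K_{0,n}}) isomorphic to a space containing genuine nilpotents, or even more directly, exhibit in $\dl{0}{2}$ the element $z:=1+e_{12}$, check $z$ is a zero divisor, and extract a nilpotent; alternatively note $\dl{0}{n}\cong\C\otimes\dl{0}{n-1}$ repeatedly gives $\dl{0}{n}\cong\C^{\otimes n}$, and $\C\otimes\C\cong\C\oplus\C$ is \emph{not} true over $\R$ — rather $\C\otimes_\R\C\cong\C\times\C$ as \emph{rings} actually is reduced, so I must be careful here. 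The safe route is: $\dl{0}{n}$ always contains a copy of $\dl{0}{2}\cong$ bicomplex numbers, and the bicomplex numbers contain the well-known idempotents $\tfrac12(1\pm e_{12})$ whose associated zero divisors witness non-integrality, while what I actually need is just that $\dl{0}{n}$ is \emph{not} a product of copies of $\R$ (it has a subalgebra isomorphic to $\C$, which $\R^{\oplus 2^n}$ does not). That last invariant — \textbf{does the algebra contain an element whose square is $-1$} — is the robust one: $e_1\in\dl{0}{n}$ squares to $-1$, whereas in $\dl{n}{0}\cong\R^{\oplus 2^n}$ no element squares to $-1$ since squares are coordinatewise nonnegative.

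So the final structure of the argument is: (a) prove the pivotal isomorphism $\dl{1}{1}\cong\dl{0}{2}$ by the explicit generator substitution $f_1=e_{12}$, $f_2=e_2$ plus a dimension count; (b) tensor it up using \eqref{Tensor decomposition Property 1} to get $\dl{p}{q}\cong\dl{0}{n}$ whenever $q\geq 1$, establishing part (2); (c) for part (1), use the tensor decomposition to identify $\dl{n}{0}\cong\R^{\oplus 2^n}$, observe this algebra has no element squaring to $-1$, and observe that $\dl{p}{q}$ with $q\geq 1$ does (namely $e_{p+1}$), so the two cannot be isomorphic. I expect step~(a) to be the only place requiring genuine (if short) computation; the main conceptual obstacle is choosing the right invariant in step~(c), and the ``element squaring to $-1$'' criterion sidesteps the subtleties of tensor products of $\C$ over $\R$ that a naive nilpotent argument would run into.
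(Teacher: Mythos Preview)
Your proposal is correct, and it lands on the same two key ideas the paper uses: for part~(1) the invariant ``does the algebra contain an element squaring to $-1$'', and for part~(2) the generator substitution trick of multiplying a positive-squaring generator by a negative-squaring one. The execution differs, though. For part~(2) the paper does it globally in one stroke: set $f_k:=e_ke_{p+1}$ for $k\le p$ and $f_k:=e_k$ for $k>p$, so that all $f_k$ square to $-1$ simultaneously and generate $\dl{p}{q}$; this is exactly your $\dl{1}{1}\cong\dl{0}{2}$ move applied $p$ times in parallel rather than iteratively through the tensor decomposition, and it avoids invoking \eqref{Tensor decomposition Property 1} altogether. For part~(1) the paper is more elementary: it computes directly that for any $U=\sum_A a_Ae_A\in\dl{n}{0}$ one has $\proj{U^2}{0}=\sum_A a_A^2\ge0$ (since every $e_A^2=+1$ and cross terms land in higher grades), hence no element squares to $-1$. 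Your route through $\dl{n}{0}\cong(\R\oplus\R)^{\otimes n}\cong\R^{2^n}$ reaches the same conclusion but leans on the tensor decomposition and distributivity; the paper's argument is self-contained at this point in the exposition. Your detour through nilpotents was rightly abandoned --- both $\dl{n}{0}\cong\R^{2^n}$ and $\dl{0}{n}\cong\C^{2^{n-1}}$ are reduced, so nilpotents cannot separate them.
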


\begin{proof}[Proof of 1]
    Let $U = \sum_{A\subseteq\firstn{n}}{a_A}{e_A} \in \dl{n}{0}$ where $e_A$s are basis elements of $\dl{n}{0}$ and $a_A\in\R$. Then $\proj{U^2}{0} = \sum_{A\subseteq\firstn{n}}{a_A^2} \ge0$. In particular, there is no element in $\dl{n}{0}$ that squares to $-1$. Then it directly follows that $\dl{n}{0} \not\cong \dl{p}{q}$ because $\dl{n}{0} \cong \dl{p}{q}$ would require there to be an element in $\dl{n}{0}$ which squares to $-1$.\qedhere
\end{proof}

\begin{proof}[Proof of 2]
     Let $p\in\Z_{\ge0}$ and $q\in\N, n =p+q$. Define
    \begin{align*}
        f_k &:= {e_k}{e_{p+1}} \text{ for } k \in \{1,2,\ldots,p\},\\
        f_k &:= e_k \text{ for } k \in \{p+1, p+2\ldots,n\}.
    \end{align*}
    Then for all $k\in\firstn{n}$, $f_k^2 = -1$ and $\{f_1,f_2,\ldots,f_n\}$ generate an algebra isomorphic to $\dl{0}{n}$. This gives an isomorphism between $\dl{p}{q}$ and $\dl{0}{n}$. Therefore $\dl{p}{q} \cong \dl{0}{n}$.\qedhere
\end{proof}

The above theorem shows that presence of one generator that squares to $-1$ leads to $n$ generators that square to $-1$. And therefore, there are two isomorphism classes corresponding to presence and absence of a generator that squares to $-1$: if a generator that squares to $-1$ is present, then the isomorphism class is $\dl{0}{n}$, and if there is no generator that squares to $-1$, then the isomorphism class is $\dl{n}{0}$.

\section{Operations of conjugation}\label{Operations of conjugation}
For the rest of the paper, we would require a notion of operations of conjugation in $\dl{p}{q}$. One can think of them as a generalisation of operations of conjugation in complex numbers, split complex numbers, quaternions, etc. Our study of operations of conjugation was inspired from operations of conjugations in Clifford algebras and the general operations of conjugation defined in \cite{OnComputing}. In bicomplex spaces, the 3 operations of conjugations are well known \cite{Price,BCHolFunction,Shapiro}. For tricomplex spaces, operation of conjugation have been discussed in \cite{PlatonicSolids, MSThesis}. In general for multicomplex spaces, the notion of operations of conjugation is fairly new and has been discussed in \cite{ MulticomplexIdeals}.

\subsection{Definition of operations of conjugation}
We define $n = p+q$ operations of conjugation in $\dl{p}{q}$ where each operation of conjugation negates one generator. More explicitly, for $l \in \firstn{n}$, we define $\conj{(.)}{l}: \dl{p}{q} \to \dl{p}{q}$ such that $\conj{U}{l} = U|_{e_l \to -e_l}$. 

\subsection{Examples}\label{operations of conjugation in n = 1}
Consider $\dl{0}{1}$, the complex numbers. Let $U = a + {a_1}{e_1} \in \dl{0}{1}$. The operation of conjugation $\conj{(.)}{1}$ in $\dl{0}{1}$ is the same as complex conjugation denoted by $\overline{(.)}$, $\overline{U} = a - {a_1}{e_1}$. Also, we know from complex number theory that $\overline{UV} = \overline{U} \, \overline{V}.$

Let us consider $\dl{1}{0}$, the split-complex numbers. Let $U = a + {a_1}{e_1} \in \dl{1}{0}$. The operation of conjugation in $\dl{1}{0}$ is same as the split-complex conjugation denoted by $(.)^*$, $\conj{U}{*} = a - {a_1}{e_1}$. Also, we know from split-complex number theory that  $(UV)^* = U^*V^*$.

Let us also consider $\dl{1}{1}$, the commutative quaternions. Let $a + {a_1}{e_1} + {a_2}{e_2} + {a_{12}}{e_{12}} \in \dl{1}{1}$. Then
\begin{align*}
    \conj{U}{1} &= a - {a_1}{e_1} + {a_2}{e_2} - {a_{12}}{e_{12}},\\
    \conj{U}{2} &= a + {a_1}{e_1} - {a_2}{e_2} - {a_{12}}{e_{12}},\\
    \conj{U}{1)(2} &= a - {a_1}{e_1} - {a_2}{e_2} + {a_{12}}{e_{12}}.
\end{align*}
The same operations of conjugations of commutative quaternions have been used in \cite{CommQuatMat}.

\subsection{Properties of operations of conjugation} 
First, we present some first consequences of the definition of operations of conjugations. 
\begin{lem}
\begin{enumerate}
\item The operations of conjugations are involutions, i.e., applying them twice gives the same result as not applying them at all. More explicitly, 
    \begin{equation}\label{properties of operations of conjugations 1}
        \conj{(\conj{U}{l})}{l} = U \text{ for all } U \in \dl{p}{q},\; l \in \firstn{n}.
    \end{equation}
    
\item The operations of conjugation are linear, i.e., 
    \begin{equation}\label{properties of operations of conjugations 2}
        \conj{(aU+bV)}{l} =a\conj{U}{l} + b\conj{V}{l} \text{ for all } U,V \in \dl{p}{q}, a,b\in \R, l \in \firstn{n}.
    \end{equation}
    
\item The operations of conjugations commute with each other, i.e., 
    \begin{equation}\label{properties of operations of conjugations 3}
        \conj{(\conj{U}{l_1})}{l_2} = \conj{(\conj{U}{l_2})}{l_1} \text{ for all } l_1, l_2 \in \firstn{n}.
    \end{equation}
\end{enumerate}
\end{lem}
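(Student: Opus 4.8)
The plan is to verify each of the three claims directly from the defining equation $\conj{U}{l} = U|_{e_l \to -e_l}$, exploiting the fact that this substitution is nothing more than the action of a well-defined algebra automorphism, and that the three statements reduce to checking behaviour on the basis $\{e_A : A \subseteq \firstn{n}\}$. Before any of that, I would record the explicit formula that makes everything mechanical: if $U = \sum_{A \subseteq \firstn{n}} u_A e_A$, then
\begin{equation*}
\conj{U}{l} = \sum_{A \subseteq \firstn{n}} (-1)^{[l \in A]} u_A e_A,
\end{equation*}
where $[l \in A]$ is $1$ if $l \in A$ and $0$ otherwise. (If the paper prefers to avoid Iverson brackets, one writes the sum split into the terms with $l \in A$, which pick up a sign, and the terms with $l \notin A$, which do not.) This formula follows because $e_A = e_{i_1} \cdots e_{i_k}$ contains the factor $e_l$ exactly once when $l \in A$, so replacing $e_l$ by $-e_l$ multiplies $e_A$ by $-1$ precisely in that case.

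Given this formula, the three parts are almost immediate. For linearity (part 2), the map $U \mapsto \conj{U}{l}$ acts on each basis coefficient $u_A$ by multiplication by the fixed scalar $(-1)^{[l\in A]}$, so it is visibly $\R$-linear; alternatively one just notes that the substitution $e_l \mapsto -e_l$ is applied term by term. For the involution property (part 1), applying the formula twice multiplies $u_A$ by $(-1)^{[l\in A]}$ twice, i.e.\ by $(-1)^{2[l\in A]} = 1$, recovering $U$; equivalently, substituting $e_l \to -e_l$ and then $e_l \to -e_l$ again is the identity substitution. For commutativity of the two conjugations (part 3), with $l_1 \neq l_2$ the composite $\conj{(\conj{U}{l_1})}{l_2}$ multiplies $u_A$ by $(-1)^{[l_1\in A]}(-1)^{[l_2\in A]} = (-1)^{[l_2\in A]}(-1)^{[l_1\in A]}$, and scalar multiplication in $\R$ commutes, so the order is irrelevant; the case $l_1 = l_2$ is trivial. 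One could phrase all three uniformly by observing that $\conj{(.)}{l}$ is the algebra automorphism of $\dl{p}{q}$ fixing every $e_j$ with $j \neq l$ and sending $e_l \mapsto -e_l$ (this is well-defined since the images still satisfy the defining relations: squares and commutation are unaffected by a sign), and automorphisms fixing the generators in an obvious compatible way compose in the stated manner.

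I do not expect a genuine obstacle here; the only thing requiring a moment's care is justifying that the substitution $U|_{e_l \to -e_l}$ is well-defined on the whole algebra rather than just on a chosen basis expression — i.e.\ that two equal elements have equal images. The cleanest way to dispose of this is the automorphism viewpoint above: since $-e_l$ squares to $e_l^2$ and still commutes with all the $e_j$, the assignment $e_l \mapsto -e_l$, $e_j \mapsto e_j$ ($j \neq l$) extends uniquely to an algebra endomorphism of $\dl{p}{q}$ by the universal property of the presentation, and it is its own inverse, hence an automorphism; then parts 1–3 are the statements that this automorphism is an involution, is $\R$-linear, and that distinct ones commute, all of which are formal. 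If the paper wants to keep the exposition elementary and basis-bound, then it suffices to note once that the basis $\{e_A\}$ is fixed throughout and the definition is given relative to it, after which the coefficient-wise formula above makes 1–3 one-line verifications each. I would present the coefficient formula as a displayed equation and then give the three verifications as three short sentences.
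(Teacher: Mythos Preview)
Your proposal is correct and, in spirit, matches the paper's own proof, which simply reads: ``These properties are obvious, and follow from the definition of operations of conjugation.'' You have supplied the explicit coefficient formula and the automorphism/universal-property justification that the paper leaves implicit; this is more detail than the authors chose to include, but it is the same argument unpacked rather than a different route.
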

\smallskip
\begin{proof}
    These properties are obvious, and follow from the definition of operations of conjugation.
\end{proof}

Keeping in mind property 3 \eqref{properties of operations of conjugations 3}, we denote superposition of many operations of conjugations by just writing them next to each other, for example, $\conj{(\conj{(\conj{U}{1})}{2})}{3} = \conj{U}{1)(2)(3}$. We introduce the following notation for a superposition of operations of conjugation: let $A = \{i_1,i_2,\hdots, i_k\} \subseteq \firstn{n}$, then we define for $U \in \dl{p}{q}$
\begin{equation}\label{notation for superposition of operations of conjugation}
    \conj{U}{A} := \conj{U}{i_1)(i_2)\ldots(i_k}.
\end{equation}
We also define $\conj{U}{\{\}} := U$. Let $A\subseteq\firstn{n}$. We call $\conj{U}{A}$ a conjugate of $U$. 

\begin{rem}\label{operations of conjugations form an abelian group}
    Note that the set of all superpositions of operations of conjugates form an Abelian group (as mentioned in \cite{MulticomplexIdeals} for the particular case of multicomplex spaces). The group operation in the group is superposition of operations of conjugation and the identity element of this group is the operation of conjugation $\conj{(.)}{\{\}}$. It is easy to see that this group is isomorphic to $\left(\frac{\Z}{2\Z}\right)^n$.
\end{rem}

Next, we have a non-trivial result -- operations of conjugation distribute over multiplication. In Clifford algebras, only the grade involution distributes over multiplication. Grade reversion and Clifford conjugation reverse the order of products when applied to a product.
\begin{thm}
Operations of conjugation distribute over multiplication.
\begin{equation}\label{properties of operations of conjugations 4}
    \conj{(UV)}{l} = \conj{U}{l}\conj{V}{l} \text{ for all } U,V \in \dl{p}{q}, l \in \firstn{n}.
\end{equation}
\end{thm}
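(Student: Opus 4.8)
The plan is to reduce the claim to the generators and then use bilinearity. Since the map $\conj{(\cdot)}{l}$ is defined by $\conj{U}{l} = U|_{e_l \to -e_l}$, the cleanest approach is to express a general product $UV$ in the basis $\{e_A : A \subseteq \firstn{n}\}$, track exactly where factors of $e_l$ appear, and compare the sign picked up on the left-hand side with the product of signs picked up on the right-hand side. First I would observe that by linearity \eqref{properties of operations of conjugations 2} it suffices to verify \eqref{properties of operations of conjugations 4} on basis elements, i.e. to show $\conj{(e_A e_B)}{l} = \conj{e_A}{l}\,\conj{e_B}{l}$ for all $A,B \subseteq \firstn{n}$.

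Next I would record the effect of $\conj{(\cdot)}{l}$ on a basis element: $\conj{e_A}{l} = (-1)^{[l \in A]} e_A$, where $[l \in A]$ is $1$ if $l \in A$ and $0$ otherwise; this is immediate from the definition since $e_A = \prod_{i \in A} e_i$ and only the factor $e_l$ (if present) changes sign. Then I would compute $e_A e_B$. Because the generators commute and $e_i^2 = \pm 1$, the product $e_A e_B$ equals $\varepsilon(A,B)\, e_{A \triangle B}$ where $A \triangle B$ is the symmetric difference and $\varepsilon(A,B) \in \{+1,-1\}$ is determined by the signs $e_i^2$ for $i \in A \cap B$ (explicitly $\varepsilon(A,B) = \prod_{i \in A \cap B} e_i^2$, which is $(-1)^{|(A\cap B)\cap\{p+1,\ldots,n\}|}$). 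The key point is that this sign $\varepsilon(A,B)$ does not involve any factor of $e_l$ as a \emph{generator} — it is already a scalar $\pm 1$ — so it is untouched by $\conj{(\cdot)}{l}$.

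Now the comparison is a short parity check. On the left, $\conj{(e_A e_B)}{l} = \varepsilon(A,B)\,\conj{e_{A\triangle B}}{l} = \varepsilon(A,B)(-1)^{[l \in A \triangle B]} e_{A \triangle B}$. On the right, $\conj{e_A}{l}\,\conj{e_B}{l} = (-1)^{[l\in A]}(-1)^{[l \in B]} e_A e_B = (-1)^{[l\in A]+[l\in B]}\varepsilon(A,B)\, e_{A\triangle B}$. So the identity reduces to $(-1)^{[l \in A \triangle B]} = (-1)^{[l\in A]+[l\in B]}$, which holds because $l \in A \triangle B$ exactly when $l$ belongs to exactly one of $A,B$, i.e. $[l \in A \triangle B] \equiv [l\in A] + [l \in B] \pmod 2$. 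That finishes the basis case, and bilinearity then gives the general statement.

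The main obstacle — really the only thing to be careful about — is making sure the scalar $\varepsilon(A,B)$ genuinely commutes past the conjugation, i.e. that the definition $\conj{U}{l} = U|_{e_l \to -e_l}$ is being applied to $U$ written in the standard basis where coefficients are honest real numbers, not to some unreduced product where $e_l$ might still appear ``inside'' a factor like $e_l^2$. Since $\conj{(\cdot)}{l}$ is an $\R$-linear map on $\dl{p}{q}$ (property \eqref{properties of operations of conjugations 2}), once we have reduced $e_A e_B$ to $\varepsilon(A,B) e_{A\triangle B}$ with $\varepsilon(A,B)\in\R$, there is no ambiguity. Alternatively, and perhaps more slickly, one could argue that $\conj{(\cdot)}{l}$ is the unique $\R$-algebra homomorphism $\dl{p}{q}\to\dl{p}{q}$ sending $e_l \mapsto -e_l$ and $e_i \mapsto e_i$ for $i\ne l$ — such a homomorphism exists because $-e_l$ and the other $e_i$ satisfy the defining relations of $\dl{p}{q}$ — and then \eqref{properties of operations of conjugations 4} is just the multiplicativity of a homomorphism; but the explicit basis computation above is elementary and self-contained, so I would present that.
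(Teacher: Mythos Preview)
Your proof is correct, but it takes a genuinely different route from the paper's own argument. You work directly on basis elements: write $e_A e_B = \varepsilon(A,B)\, e_{A\triangle B}$ with $\varepsilon(A,B)\in\R$, observe that $\conj{e_A}{l} = (-1)^{[l\in A]}e_A$, and reduce the claim to the parity identity $[l\in A\triangle B]\equiv [l\in A]+[l\in B]\pmod 2$. The paper instead proceeds by induction on $n=p+q$: using the decomposition $\dl{p}{q}\cong \dl{p'}{q'}\otimes\dl{1}{0}$ or $\dl{p'}{q'}\otimes\dl{0}{1}$, it writes $U=x_1+y_1 e_{m+1}$, $V=x_2+y_2 e_{m+1}$ with $x_i,y_i\in\dl{p'}{q'}$, expands $UV$, and applies the induction hypothesis when $l\le m$ (and a direct check when $l=m+1$). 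Your argument is more elementary and self-contained --- it needs nothing beyond linearity and the multiplication rule for basis elements --- while the paper's inductive proof deliberately echoes the ``onion-like'' extension-of-scalars structure developed in Section~\ref{section: tpd}, reinforcing a theme of the paper at the cost of a longer computation. Your closing remark that $\conj{(\cdot)}{l}$ is simply the algebra endomorphism determined on generators is also a valid (and arguably the cleanest) way to see the result.
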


\begin{proof}
    We prove this using mathematical induction on $n:=p+q$. We have proved the result for n = 1 in Section \ref{operations of conjugation in n = 1}. This will be our base case.
    
    Our induction hypothesis is that \eqref{properties of operations of conjugations 4} holds for $n = m$, i.e., $\conj{(UV)}{l} = \conj{U}{l}\conj{V}{l}$ for all $U,V \in \dl{p}{q},\; l\in\firstn{m}, p+q = m$. We will now show that \eqref{properties of operations of conjugations 4} holds for $n=m+1$.
    
    Let $p+q=m+1$. Let $U,V \in \dl{p}{q}$. Then by \eqref{Tensor decomposition Property 1}, $\dl{p}{q} \cong \dl{p^\prime}{q^\prime} \otimes \dl{1}{0} \text{ or } \dl{p^\prime}{q^\prime} \otimes \dl{0}{1}$, according to if $e_{m+1}^2 = +1 \text{ or } -1$, where $p^\prime + q^\prime = m$.  Using this isomorphism, we can express $U, V$ as $U = {x_1} + {y_1}{e_{m+1}}$, $V = {x_2} + {y_2}{e_{m+1}}$ where ${x_1}, {x_2}, {y_1}, {y_2} \in \dl{p^\prime}{q^\prime}$. Now, $UV = {x_1}{x_2} + e_{m+1}^2{y_1}{y_2} + ({x_1}{y_2} + {x_2}{y_1})e_{m+1}$. Using the linearity of operations of conjugation, we get
    \begin{align*}
        \conj{(UV)}{l} &= \conj{({x_1}{x_2} + e_{m+1}^2{y_1}{y_2} + ({x_1}{y_2} + {x_2}{y_1})e_{m+1})}{l}\\
        &= \conj{({x_1}{x_2})}{l} + e_{m+1}^2\conj{({y_1}{y_2})}{l} + \conj{({x_1}{y_2}e_{m+1})}{l} + \conj{({x_2}{y_1}e_{m+1})}{l}.
    \end{align*}
    If $l \le m$, we use the induction hypothesis to get
    \begin{align*}
        \conj{(UV)}{l} &= \conj{({x_1}{x_2})}{l} + e_{m+1}^2\conj{({y_1}{y_2})}{l} + \conj{({x_1}{y_2}e_{m+1})}{l} + \conj{({x_2}{y_1}e_{m+1})}{l}\\
        &= \conj{x_1}{l}\conj{x_2}{l} + e_{m+1}^2\conj{y_1}{l}\conj{y_2}{l} + e_{m+1}(\conj{x_1}{l}\conj{y_2}{l} + \conj{x_2}{l}\conj{y_1}{l})\\
        &= (\conj{x_1}{l} + \conj{y_1}{l}e_{m+1})(\conj{x_2}{l} + \conj{y_2}{l}e_{m+1}) = \conj{U}{l}\conj{V}{l}.
    \end{align*} 
    Thus, $\conj{(UV)}{l} = \conj{U}{l}\conj{V}{l}$ if $l \le m$.
    
    If $l = m+1$, then $\conj{\alpha}{l} = \alpha$ for $\alpha \in \{x_1, y_1, x_2, y_2\}$. We get 
   \begin{align*}
        \conj{(UV)}{m+1} &= \conj{({x_1}{x_2})}{m+1} + e_{m+1}^2\conj{({y_1}{y_2})}{m+1} +\\
        & \quad \conj{({x_1}{y_2}e_{m+1})}{m+1} + \conj{({x_2}{y_1}e_{m+1})}{m+1}\\
        &= {x_1}{x_2} + e_{m+1}^2{y_1}{y_2} -e_{m+1}({x_1}{y_2} + {x_2}{y_1})\\ 
        &= ({x_1} - {y_1}e_{m+1})({x_2} - {y_2}e_{m+1}) = \conj{U}{m+1}\conj{V}{m+1}.
   \end{align*}
    Thus, $\conj{(UV)}{l} = \conj{U}{l}\conj{V}{l}$ for all $l \in \firstn{n}$.
\end{proof}

\section{Special basis in $\dl{p}{q}$}\label{Section: Special basis in K(p,q)}
It follows from Section \ref{section: isomorphism classes} that a general commutative analogue of Clifford algebras $\dl{p}{q}$ is isomorphic to either $\dl{n}{0}$ or $\dl{0}{n}$. Therefore, without loss of generality, we will work only with $\dl{n}{0}$ and $\dl{0}{n}$ in this section. Previously, we have considered the tensor product of algebras. Now, we turn our attention to direct sum of algebras. First, we quickly recall the definition of direct sum of algebras. 

\subsection{Direct sums of algebras}
Let $A_1,A_2,\hdots,A_n$ be $n$ finite-dimensional associative algebras with unity. Since each $A_k$ is an algebra, it is also a vector space and we can consider the direct sum of $A_k$s thinking of them as vector spaces: $A_1\oplus A_2\oplus \cdots \oplus A_n$. Now, we can endow this direct sum vector space with a `natural' multiplication and make $A_1\oplus A_2\oplus \cdots \oplus A_n$ into an algebra. Let $(a_1,a_2,\hdots,a_n), (b_1,b_2,\hdots,b_n) \in A_1\oplus A_2\oplus \cdots \oplus A_n$. We define multiplication coordinate-wise
$$(a_1,a_2,\hdots,a_n) \cdot (b_1,b_2,\hdots,b_n) := ({a_1}\cdot{b_1},{a_2}\cdot{b_2},\hdots,{a_n}\cdot{b_n}).$$
This multiplication makes $A_1\oplus A_2\oplus \cdots \oplus A_n$ into a finite-dimensional associative algebra with unity. Let $1_1, 1_2, \hdots, 1_n$ be the multiplicative identities of $A_1, A_2, \hdots, A_n$ respectively. Then the multiplicative identity of $A_1\oplus A_2\oplus \cdots \oplus A_n$ is $(1_1, 1_2, \ldots, 1_n)$. We discuss important examples: direct sums of $\R$s and $\C$s. This will be helpful in the rest of the paper.

\subsection{Direct sums of $\R$s and $\C$s}\label{subsection: direct sums of Rs and Cs}
$\R$ and $\C$ are well-known algebras. Let us consider their direct sums 
\begin{eqnarray}
\R^m := \underbrace{\R\oplus\R\oplus\cdots\oplus\R}_{m \,\, \text{times}},\qquad \C^m := \underbrace{\C\oplus\C\oplus\cdots\oplus\C}_{m \,\, \text{times}},
\end{eqnarray}
where the multiplication in them are defined coordinate wise just like we discussed above. Let us take a look at multiplication of basis elements in them. The reason for discussing multiplication of basis elements in the algebras $\R^m$ and $\C^m$ is because we will show that $\dl{n}{0}$ is isomorphic to a direct sum of $\R$s and $\dl{0}{n}$ is isomorphic to a direct sum of $\C$s.

\subsubsection{Direct sum of $\R$s}
For each $k\in\firstn{m}$ let $b_k=(0,\hdots,1,\hdots,0) \in\R^m$ be such that there is 1 at the $k$\textsuperscript{th} position and $0$ elsewhere. Then $\{b_1,b_2,\hdots,b_m\}$ is the standard basis for $\R^m$. The multiplication of basis elements is quite simple:
\begin{equation}\label{EQn: multiplication in direct sums of reals}
    b_k b_l = \delta_{kl}b_k
\end{equation}
where $\delta_{kl}$ is the Kronecker delta\footnote{$\delta_{kl} = 1$ if $k=l$ and $\delta_{kl} = 0$ otherwise.}. Writing explicitly, $b_k b_l = (0,0,\hdots,0)$ if $k\ne l$ and $b_k^2 = b_k$ i.e., $b_k$s are idempotents. A consequence of this structure in $\R^m$ is that if we have an algebra consisting of basis elements whose multiplication resembles \eqref{EQn: multiplication in direct sums of reals}, then that algebra is isomorphic to a direct sum of $\R$s.

Now, we investigate direct sums of $\C$s.
\subsubsection{Direct sum of $\C$s}
We will consider $\C^m$ as an real algebra, i.e., we will take the scalars to be real numbers. Therefore, $\C^m$ is $2m$-dimensional real algebra.

For each $k\in\firstn{m}$ let $b_k^1 :=(0,\hdots,1,\hdots,0) \in\R^m$ be such that there is 1 at the $k$\textsuperscript{th} position and $0$ elsewhere. Let $b_k^i := ib_k^1 = (0,\hdots,i,\hdots,0)\in\C^m$. The multiplication of basis elements is as follows:
\begin{eqnarray}\label{EQn: multiplication in direct sums of complex numbers}
    b_k^1 b_l^1 = \delta_{kl} b_k^1,\qquad\qquad
    b_k^i b_l^i = -\delta_{kl} b_k^1,
\end{eqnarray}
that is, basis elements with different subscripts multiply to $0$, $b_k^1$s are idempotents and $b_k^i$s squares to $-b_k^1$. A consequence of this structure in $\C^m$ is that if we have a real algebra consisting of basis elements whose multiplication resembles \eqref{EQn: multiplication in direct sums of complex numbers}, then that algebra is isomorphic to a direct sum of $\C$s.

Now we come back to the algebras $\dl{n}{0}$ and $\dl{0}{n}$. What we seek in the next section is a basis in $\dl{n}{0}$ such that the multiplication of its elements resembles \eqref{EQn: multiplication in direct sums of reals} and a basis in $\dl{0}{n}$ such that its multiplication resembles \eqref{EQn: multiplication in direct sums of complex numbers}. We call these bases `special'. We start the hunt for these bases with a motivation, the reason why we expect them to exist.

\subsection{Special basis for $\dl{n}{0}$}\label{subsection: special basis for K(n,0)}
\subsubsection{Motivation}
It is well known that the split complex numbers $\dl{1}{0}$ are isomorphic to $\R\oplus\R$. Consider the elements $f_+ = \frac{1 + {e_1}}{2}$ and $f_- = \frac{1 - {e_1}}{2}$ in $\dl{1}{0}$. Then it follows that 
\begin{align*}
    f_+^2 &= f_+,\\
    f_-^2 &= f_-,\\
    {f_+}{f_-} &= 0.
\end{align*}
As discussed in Section \ref{subsection: direct sums of Rs and Cs} that is then clear the identification $f_+ \leftrightarrow (1,0), f_- \leftrightarrow (0,1)$ is an isomorphism between $\dl{1}{0}$ and $\R\oplus\R$.

Now, since $\dl{n}{0} \cong \underbrace{\dl{1}{0} \otimes \dl{1}{0} \cdots \otimes \dl{1}{0}}_{n \; \text{times}}$, using the distributivity of tensor product over direct sums and $\R\otimes\R$ being isomorphic to $ \R$, it follows that in general,
$$\dl{n}{0} \cong \underbrace{\R\oplus\R\oplus\cdots\oplus\R}_{2^n \text{ times}} = \R^{2^n}.$$
We will give the isomorphism between $\dl{n}{0}$ and $\R^{2^n}$ next, but before that, note that if we define $f = \frac{1}{2}(1 + {e_1}) \in \dl{1}{0}$, then $f_+ = \conj{f}{\{\}}$ and $f_- = \conj{f}{1}$. We will see that a generalization of this will result in an isomorphism between $\dl{n}{0}$ and $\underbrace{\R\oplus\R\oplus\cdots\oplus\R}_{2^n \text{ times}}$.

\subsubsection{The basis}
Let $f \in \dl{n}{0}$ be the sum of all basis elements in $\dl{n}{0}$ modulo a constant i.e.,
$$f := \frac{1}{2^n}\left(\sum_{A\subseteq\firstn{n}}{e_A}\right).$$
For $B\subseteq\firstn{n}$, define 
$$f_B := \conj{f}{B}.$$
The pre-factor $\frac{1}{2^n}$ can be thought of as a normalisation constant. We give some interesting results which follow from $f$ being a very symmetric combination of generators $e_1,e_2,\hdots,e_n$.

\begin{lem}\label{Lemma: properties of f_Bs} For $B\subseteq\firstn{n}$, we have\\

    1. $f_B^2=f_B$,\\

    2. ${e_k}{f_B} = \left\{
        \begin{aligned}
            &f_B,& \mbox{if $k \notin B$,}\\
            &-f_B,& \mbox{if $k \in B$.}
        \end{aligned}\right.$\\
        
    3. $\sum_{B\subseteq\firstn{n}} f_B = 1.$
\end{lem}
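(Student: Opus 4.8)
The plan is to prove the three claims in order, since the first feeds the others, and to exploit the key structural fact that $f$ is symmetric under all the conjugations. The crucial preliminary observation is how each generator $e_k$ acts on $f$ itself: partition the basis elements $e_A$ (with $A \subseteq \firstn{n}$) into those with $k \in A$ and those with $k \notin A$, pairing $A$ with $A \triangle \{k\}$. Since $e_k e_A = e_{A \triangle \{k\}}$ when $k \notin A$ and $e_k e_A = e_{A \triangle \{k\}}$ (using $e_k^2 = 1$ in $\dl{n}{0}$) when $k \in A$, multiplication by $e_k$ merely permutes the full set of basis elements, hence fixes their sum. Therefore $e_k f = f$ for every $k \in \firstn{n}$. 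From this, $e_A f = f$ for every $A$, and consequently $f^2 = \frac{1}{2^n}\sum_{A} e_A f = \frac{1}{2^n} \cdot 2^n f = f$; so $f$ is idempotent.

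For claim 2, I would first handle $f_{\{\}} = f$. By definition $f_B = \conj{f}{B}$, and conjugation $\conj{(.)}{B}$ is the linear map sending $e_k \mapsto -e_k$ for $k \in B$ and fixing the other generators. So $f_B = \frac{1}{2^n}\sum_{A} (-1)^{|A \cap B|} e_A$. Now compute $e_k f_B$: using $e_k e_A = e_{A \triangle \{k\}}$ and reindexing $A \mapsto A \triangle \{k\}$ in the sum, the sign $(-1)^{|A \cap B|}$ becomes $(-1)^{|(A\triangle\{k\}) \cap B|}$, which differs from $(-1)^{|A \cap B|}$ by exactly the factor $(-1)^{[k \in B]}$. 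Hence $e_k f_B = f_B$ if $k \notin B$ and $e_k f_B = -f_B$ if $k \in B$. (Alternatively, and perhaps more cleanly for the write-up: apply $\conj{(.)}{B}$ to the identity $e_k f = f$; since conjugation distributes over multiplication by the theorem proved in Section~\ref{Operations of conjugation}, $\conj{(e_k f)}{B} = \conj{e_k}{B}\conj{f}{B} = \pm e_k f_B$, giving the result, and idempotency of $f_B$ follows by applying $\conj{(.)}{B}$ to $f^2 = f$.) I expect I will present the conjugation-distributes argument as the main line, since it reuses machinery already established.

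For claim 3, write $S := \sum_{B \subseteq \firstn{n}} f_B = \frac{1}{2^n}\sum_{B}\sum_{A} (-1)^{|A \cap B|} e_A = \frac{1}{2^n}\sum_{A}\left(\sum_{B}(-1)^{|A\cap B|}\right) e_A$, and evaluate the inner coefficient $c_A := \sum_{B \subseteq \firstn{n}} (-1)^{|A \cap B|}$. Splitting the choice of $B$ into its intersection with $A$ and its intersection with the complement of $A$, one gets $c_A = \left(\sum_{B' \subseteq A}(-1)^{|B'|}\right) \cdot 2^{n - |A|}$, and the alternating-sign sum $\sum_{B' \subseteq A}(-1)^{|B'|}$ equals $0$ when $A \neq \{\}$ and equals $1$ when $A = \{\}$. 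Hence $c_{\{\}} = 2^n$ and $c_A = 0$ otherwise, so $S = \frac{1}{2^n} \cdot 2^n \cdot e_{\{\}} = 1$.

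The computations here are all elementary; the only place requiring a little care is keeping the sign bookkeeping straight under reindexing by the symmetric difference, and making sure the argument for claim 2 is phrased to cover general $B$ rather than just $B = \{\}$. The main conceptual obstacle — realizing that $e_k$ acts as a sign-permutation on the basis so that the symmetric sum $f$ is essentially an eigenvector — is dispatched at the very start, after which everything is routine.
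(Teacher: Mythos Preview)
Your proposal is correct and shares the overall strategy with the paper: prove each claim first for $f$ itself, then transfer to $f_B$ by applying $\conj{(.)}{B}$ and using that conjugation distributes over multiplication. The differences are in the local arguments. For Part~1 the paper runs an induction on $n$, factoring $f=\tfrac12(x+e_m x)$ with $x$ the analogous element on $m-1$ generators; you instead first establish $e_k f=f$ (as the paper does separately for Part~2) and then get $f^2=f$ in one line from $f^2=\tfrac{1}{2^n}\sum_A e_A f$, which is shorter and avoids the induction. For Part~3 the paper argues informally by a sign-cancellation symmetry, whereas you compute the inner sum $\sum_B (-1)^{|A\cap B|}=2^{n-|A|}\sum_{B'\subseteq A}(-1)^{|B'|}$ explicitly, which is a cleaner character-orthogonality argument. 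Either route is fine; your version is slightly more self-contained and the reuse of $e_k f=f$ to obtain idempotency is a nice economy.
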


\begin{proof}[Proof of 1]
    First, we will prove using mathematical induction on $n$, that $f^2=f$.

    We first show that the result holds for $n=1$. When $n=1$, $f = \frac{1}{2}(1+e_1)$. Then 
    \begin{align*}
        f^2 &= \frac{1}{2}(1+e_1)\frac{1}{2}(1+e_1) = \frac{1}{4}(2 + 2{e_1}) = f.
    \end{align*} 
    This is our base case.

    Our induction hypothesis is that the result holds for $n = m-1$. Assuming this, we will prove that the result holds for $n=m$.
    Let 
    $$x = \frac{1}{2^{m-1}}\left(\sum_{A\subseteq\{1,2,\hdots,m-1\}}{e_A}\right).$$
    Then $f = \frac{1}{2}(x+{e_m}x)$. Now, 
    \begin{align*}
        \!f^2 = \frac{1}{2}\left({x+{e_m}x}\right)\frac{1}{2}\left({x+{e_m}x}\right) = \frac{1}{4}(x^2 + {e_m}x + x{e_m} + x^2) = \frac{1}{2}(x^2 + {e_m}x).
    \end{align*}
    By our induction hypothesis, $x^2 = x$. Therefore,
    \begin{align*}
        f^2 &= \frac{1}{2}(x+{e_m}x) = f.
    \end{align*}
    Let $B\subseteq\firstn{n}$. Since $f^2=f$, applying operation of conjugation $\conj{(.)}{B}$ on both sides, we get $f_B^2=f_B$. This completes the proof.
\end{proof}

\begin{proof}[Proof of 2]
    We will first show that for all $k\in\firstn{n}$, $e_k f = f$. Let 
    $$x = \frac{1}{2^{n-1}}\left(\sum_{A\subseteq\firstn{n}\setminus\{k\}}{e_A}\right).$$
    Then $f = \frac{1}{2}(x+{e_k}x)$. Now, 
    \begin{align*}
        {e_k}f &= {e_k}(\frac{1}{2}(x+{e_k}x)) = \frac{1}{2}({e_k}x+x) = f.
    \end{align*}
    Let $B\subseteq\firstn{n}$. Applying operation of conjugation $\conj{(.)}{B}$ on both sides, we get $\conj{({e_k}f)}{B} = \conj{f}{B}$ which implies $\conj{e_k}{B}\conj{f}{B} = \conj{f}{B}$. Now, since $\conj{e_k}{B} = \left\{
    \begin{aligned}
            &e_k,& \mbox{if $k \notin B$,}\\
            &-e_k,& \mbox{if $k \in B$,}
\end{aligned}\right.$ the result follows.
\end{proof}

\begin{proof}[Proof of 3]
    Since $f_B = \conj{f}{B}$, when we add all $f_B$s together, all the terms in which a generator appears cancel out because in the sum, there are an equal number of terms involving $e_A$s with a plus sign and terms involving $e_A$s with a minus sign. Therefore, in the sum $\sum_{B\subseteq\firstn{n}} f_B$, only the grade $0$ terms survive and their sum is equal to 1 because each $f_B$ contributes $\frac{1}{2^n}$ and there are $2^n$ $f_B$s. 
\end{proof}
 
\begin{lem}\label{Lemma: orthogonality of f_Bs}
    Let $B$ be a non-empty subset of $\firstn{n}$, then $f{f_B} = 0$.
\end{lem}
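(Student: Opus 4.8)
The plan is to deduce this directly from Lemma~\ref{Lemma: properties of f_Bs}. Part~2 of that lemma, applied with the empty set, gives $e_k f = e_k f_{\{\}} = f$ for every $k\in\firstn{n}$; the same part gives $e_k f_B = -f_B$ whenever $k\in B$. Since $B$ is non-empty, I would fix any index $k\in B$ --- this is the only place where non-emptiness is used --- and then compute $f f_B$ in two ways, exploiting associativity and commutativity of $\dl{n}{0}$:
\[
f f_B = (e_k f) f_B = e_k (f f_B) = (e_k f_B)\, f = (-f_B)\, f = -(f f_B).
\]
Hence $2\, f f_B = 0$, and since $\dl{n}{0}$ is a real algebra (its underlying vector space has no $2$-torsion), this forces $f f_B = 0$.

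The argument is essentially a one-liner once Lemma~\ref{Lemma: properties of f_Bs} is available, so I do not expect any genuine obstacle. The two points to be careful about are: (i) invoking the correct sign, i.e. making sure the chosen $k$ really lies in $B$ so that $e_k f_B = -f_B$ rather than $+f_B$; and (ii) noting that passing from $2\, f f_B = 0$ to $f f_B = 0$ is legitimate because we are working over $\R$. An alternative phrasing of the same computation is to write $f_B = -\,e_k f_B$ and then $f f_B = -\,(f e_k) f_B = -\,(e_k f) f_B = -\,f f_B$; either version works.

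It is worth recording that this lemma, together with Lemma~\ref{Lemma: properties of f_Bs}, is exactly what is needed to show the $f_B$ (for $B\subseteq\firstn{n}$) are pairwise orthogonal idempotents: applying a conjugation $\conj{(.)}{C}$ to $f f_B = 0$ will transfer orthogonality of $f=f_{\{\}}$ with $f_B$ to orthogonality of $f_C$ with $f_{B\,\triangle\,C}$, and running over all $C$ covers every pair of distinct subsets. So the present statement is the key base case for the "special basis" of $\dl{n}{0}$ developed in the next subsection.
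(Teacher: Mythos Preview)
Your proof is correct and is genuinely different from the paper's argument. The paper does not invoke Lemma~\ref{Lemma: properties of f_Bs}; instead it picks $k\in B$, writes $B=B'\sqcup\{k\}$, expands $f=\tfrac{1}{2}(x+e_kx)$ with $x=\tfrac{1}{2^{n-1}}\sum_{A\subseteq\firstn{n}\setminus\{k\}}e_A$, and computes directly that $\conj{f}{k}\conj{f}{B'}=\tfrac{1}{4}\bigl(x\conj{x}{B'}+e_kx\conj{x}{B'}-e_kx\conj{x}{B'}-x\conj{x}{B'}\bigr)=0$; applying $\conj{(.)}{k}$ then yields $ff_B=0$. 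Your route is shorter and cleaner: since Lemma~\ref{Lemma: properties of f_Bs} has already established the eigenvector-type relations $e_kf=f$ and $e_kf_B=-f_B$, the single line $ff_B=(e_kf)f_B=f(e_kf_B)=-ff_B$ finishes it without re-expanding $f$. The paper's computation has the mild advantage of being self-contained (it does not quote part~2 of the preceding lemma), but given that Lemma~\ref{Lemma: properties of f_Bs} is already in hand, your argument is the more economical of the two. Your closing remark about how this lemma feeds into the orthogonality of all pairs $f_A,f_B$ via conjugation by $\conj{(.)}{A}$ and the symmetric difference is exactly what the paper does in Theorem~\ref{Theorem: f_Bs is orthonormal basis}.
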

\begin{proof}
    Let $B\subseteq\firstn{n}$ such that $|B|>0$. Then we can express $B$ as 
    $$B = B^\prime\sqcup\{k\}$$
    where $k\in B$ and $B^\prime = B\setminus\{k\}$. Let 
    $$x = \frac{1}{2^{n-1}}\left(\sum_{A\subseteq\firstn{n}\setminus\{k\}}{e_A}\right).$$
    Then $f = \frac{1}{2}(x+{e_k}x)$. Consider $\conj{f}{k}\conj{f}{B^\prime}$:
    \begin{multline*}
        \conj{f}{k}\conj{f}{B^\prime} = \frac{1}{2}(x-{e_k}x)\frac{1}{2}\conj{(x+{e_k}x)}{B^\prime}\\ 
        = \frac{1}{4}(x\conj{x}{B^\prime} + {e_k}x\conj{x}{B^\prime} - {e_k}x\conj{x}{B^\prime} - x\conj{x}{B^\prime}) = 0.
    \end{multline*}
    Now, 
    \begin{align*}
        f{f_B} &= f\conj{f}{B} = f\conj{f}{B^\prime)(k} = \conj{(\conj{f}{k}\conj{f}{B^\prime})}{k} = \conj{0}{k} =0.\qedhere
    \end{align*}
\end{proof}

Let $U\in\dl{n}{0}$. Then for each $A \subseteq\firstn{n}$, define linear map $\zeta_A:\dl{n}{0}\to\R$ by defining $\zeta_A(e_C) := (-1)^{|A\cap C|}$, i.e., we replace generators in $e_C$ by $-1$ if their index is present in $A$ and $+1$ if their index is absent in $A$. We have defined $\zeta_A$ on the basis elements and one then extends it to all of $\dl{n}{0}$ via linearity.
\begin{lem}\label{Theorem: real irred reps are coefficients}
    Let $U\in\dl{n}{0}$, $B\subseteq\firstn{n}$. Then $Uf_B = {\zeta_B}(U)f_B$.
\end{lem}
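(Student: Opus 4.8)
The plan is to reduce the statement to the case of basis elements by linearity, and then to peel off the generators of $e_C$ one at a time using part 2 of Lemma~\ref{Lemma: properties of f_Bs}. First I would observe that both sides of the asserted identity $Uf_B = \zeta_B(U)f_B$ are linear in $U$: the left-hand side because multiplication in $\dl{n}{0}$ is bilinear, and the right-hand side because $\zeta_B$ is linear by construction and $f_B$ is a fixed element. Hence it is enough to verify the identity on the basis, i.e.\ to show that $e_C f_B = (-1)^{|B\cap C|} f_B = \zeta_B(e_C)f_B$ for every $C\subseteq\firstn{n}$.

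To prove the basis case I would argue by induction on $|C|$ (equivalently, by iterated use of part 2 of Lemma~\ref{Lemma: properties of f_Bs}). Write $C=\{k_1,k_2,\ldots,k_l\}$, so that $e_C = e_{k_1}e_{k_2}\cdots e_{k_l}$; using associativity, $e_C f_B = (e_{k_1}\cdots e_{k_{l-1}})(e_{k_l} f_B)$. By part 2 of the lemma, $e_{k_l} f_B$ equals $f_B$ if $k_l\notin B$ and $-f_B$ if $k_l\in B$, so one generator has been absorbed at the cost of a sign $(-1)^{[k_l\in B]}$; repeating for $k_{l-1},\ldots,k_1$ and collecting the signs yields $e_C f_B = (-1)^{\#\{j\,:\,k_j\in B\}} f_B = (-1)^{|B\cap C|} f_B$. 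Since $\zeta_B(e_C) = (-1)^{|B\cap C|}$ by definition, this is precisely $e_C f_B = \zeta_B(e_C)f_B$.

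Finally, for a general $U=\sum_{C\subseteq\firstn{n}} u_C e_C$ with $u_C\in\R$, linearity gives
\[
Uf_B \;=\; \sum_{C} u_C\,(e_C f_B) \;=\; \sum_{C} u_C\,\zeta_B(e_C)\,f_B \;=\; \Bigl(\sum_{C} u_C\,\zeta_B(e_C)\Bigr)f_B \;=\; \zeta_B(U)\,f_B,
\]
which is the claim. I do not expect any genuine obstacle here: the argument is a one-line reduction plus a bookkeeping induction on the cardinality of $C$. The only point deserving a word of care is that the order in which the generators $e_{k_j}$ are removed is irrelevant and the symbol $e_C$ is unambiguous to begin with — but this is exactly the commutativity built into $\dl{n}{0}$, which is precisely what makes this identity (and hence the forthcoming direct-sum decomposition) so much cleaner than its Clifford-algebra counterpart.
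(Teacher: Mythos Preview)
Your proposal is correct and follows essentially the same route as the paper's proof: both reduce to basis elements by linearity, then iterate part~2 of Lemma~\ref{Lemma: properties of f_Bs} to obtain $e_C f_B = (-1)^{|B\cap C|} f_B$, and finally sum over $C$ to conclude $Uf_B=\zeta_B(U)f_B$. The only cosmetic difference is that the paper writes out the final summation explicitly rather than appealing to linearity in the abstract.
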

\begin{proof}
    Let $B\subseteq\firstn{n}$. Let $A = \{a_1,a_2,\hdots,a_m\} \subseteq\firstn{n}$. We will find ${e_A}{f_B}$. Recall from Lemma \ref{Lemma: properties of f_Bs} that $${e_k}{f_B} = \left\{
        \begin{aligned}
            &f_B,& \mbox{if $k \notin B$},\\
            &-f_B,& \mbox{if $k \in B$.}
        \end{aligned}\right.$$
    Iteratively using this, we get,
    $${e_A}{f_B} = (-1)^{|A\cap B|}{f_B}.$$
    Let $U = \sum_{A\subseteq\firstn{n}}{{a_A}{e_A}} \in \dl{n}{0}$, $B\subseteq\firstn{n}$. Then 
    \begin{align*} 
        U{f_B} &= \sum_{A\subseteq\firstn{n}}{{a_A}({e_A}{f_B})} = \sum_{A\subseteq\firstn{n}}{{a_A}((-1)^{|A\cap B|}{f_B})}\\
        &= \sum_{A\subseteq\firstn{n}}{((-1)^{|A\cap B|}{a_A}){f_B}} = \left(\sum_{A\subseteq\firstn{n}}{(-1)^{|A\cap B|}{a_A}}\right){f_B}\\
        &= {\zeta_B}(U){f_B}
    \end{align*}
    because $\sum_{A\subseteq\firstn{n}}{(-1)^{|A\cap B|}{a_A}} = {\zeta_B}(U)$ since the coefficient $a_A$ gets negated for each element of $A$ in $B$.
\end{proof}

\begin{thm}\label{Theorem: f_Bs is orthonormal basis}
    Let $A,B \subseteq\firstn{n}$, then $${f_A}{f_B} = \left\{
        \begin{aligned}
            &f_A,& \mbox{if $A=B$},\\
            &0,& \mbox{if $A\ne B$.}
        \end{aligned}\right.$$ Further, $\{f_B \;|\; B\subseteq\firstn{n}\}$ is a basis for $\dl{n}{0}$.
\end{thm}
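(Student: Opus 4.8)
The plan is to read off the multiplication table of the $f_B$ directly from the lemmas already proved, and then obtain the basis assertion by a dimension count. First I would record the identity
\[
 f_A f_B \;=\; \conj{\bigl(f\,\conj{f}{A\triangle B}\bigr)}{A},
\]
valid for all $A,B\subseteq\firstn{n}$: indeed, applying $\conj{(.)}{A}$ to the right-hand side and using that it distributes over products \eqref{properties of operations of conjugations 4}, together with the fact that superposition of operations of conjugation corresponds to symmetric difference of index sets (Remark \ref{operations of conjugations form an abelian group}), turns it into $\conj{f}{A}\,\conj{f}{(A\triangle B)\triangle A}=\conj{f}{A}\,\conj{f}{B}=f_A f_B$. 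Now I split into cases. If $A=B$, then $A\triangle B=\emptyset$ and the right-hand side is $\conj{(f^2)}{A}=\conj{f}{A}=f_A$ by part 1 of Lemma \ref{Lemma: properties of f_Bs}. If $A\ne B$, then $A\triangle B$ is a non-empty subset of $\firstn{n}$, so $f\,\conj{f}{A\triangle B}=0$ by Lemma \ref{Lemma: orthogonality of f_Bs}, whence $f_A f_B = \conj{0}{A}=0$ by linearity of conjugation. This establishes the displayed multiplication rule.

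An alternative route to the same rule is to invoke Lemma \ref{Theorem: real irred reps are coefficients} with $U=f_A$, which gives $f_A f_B = \zeta_B(f_A)\,f_B$; expanding $f_A=\frac{1}{2^n}\sum_{C}(-1)^{|A\cap C|}e_C$ one computes $\zeta_B(f_A)=\frac{1}{2^n}\sum_{C\subseteq\firstn{n}}(-1)^{|(A\triangle B)\cap C|}$, and this alternating sum over all subsets $C$ equals $1$ when $A\triangle B=\emptyset$ (i.e. $A=B$) and $0$ otherwise. Either derivation is short; I would probably present the conjugation-based one in the body and mention the $\zeta_B$ version in a remark.

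For the basis assertion, recall that $\dl{n}{0}$ has dimension $2^n$ and that there are exactly $2^n$ subsets $B\subseteq\firstn{n}$, so it is enough to prove that the vectors $f_B$ are linearly independent. Suppose $\sum_{B\subseteq\firstn{n}} c_B f_B = 0$ with $c_B\in\R$. I fix $A\subseteq\firstn{n}$ and multiply the relation by $f_A$; by the orthogonality just established every summand with $B\ne A$ vanishes and one is left with $c_A f_A = 0$. Since operations of conjugation do not change the grade-$0$ component, $\proj{f_A}{0}=\proj{f}{0}=\frac{1}{2^n}\ne 0$, so $f_A\ne 0$ and hence $c_A=0$; as $A$ was arbitrary, the family $\{f_B \mid B\subseteq\firstn{n}\}$ is linearly independent and therefore a basis of $\dl{n}{0}$.

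I do not expect a genuine obstacle here. The one point that needs care is the bookkeeping behind the identity $f_A f_B = \conj{(f\,\conj{f}{A\triangle B})}{A}$ — namely that composing the operations of conjugation indexed by $A\triangle B$ and then by $A$ leaves precisely the generators indexed by $B$ negated — and the small but essential observation that every $f_B$ is nonzero, which is exactly what lets the linear-independence step conclude.
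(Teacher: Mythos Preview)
Your argument is correct and, for the multiplication rule, essentially identical to the paper's: the paper also writes $f_A f_B = \conj{(f\,\conj{f}{A\triangle B})}{A}$ and then invokes Lemma~\ref{Lemma: properties of f_Bs} (part~1) for $A=B$ and Lemma~\ref{Lemma: orthogonality of f_Bs} for $A\ne B$; your linear-independence step (multiply by $f_A$, use $f_A\ne 0$) is likewise the same. The one minor difference is that you complete the basis assertion by a dimension count, whereas the paper instead proves spanning directly via $U = U\cdot\sum_B f_B = \sum_B \zeta_B(U)\,f_B$ using Lemma~\ref{Theorem: real irred reps are coefficients} and part~3 of Lemma~\ref{Lemma: properties of f_Bs}; both are valid, but the paper's route has the side benefit of recording the explicit change-of-basis formula~\eqref{basis change formula for K_{n,0}}.
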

\begin{proof}
    Let $A\subseteq\firstn{n}$. From Lemma \ref{Lemma: properties of f_Bs}, it follows that ${f_A}^2 = {f_A}$. Now, if $A,B\subseteq\firstn{n}$ such that $A\ne B$, then note that 
    $${f_A}{f_B} = \conj{f}{A}\conj{f}{B} = \conj{(f\conj{f}{A)(B})}{A} = \conj{(f\conj{f}{A\va B})}{A},$$ where $A\va B$ denotes symmetric difference of the sets $A$ and $B$, i.e., $A\va B = A\cup B\setminus A\cap B$. Since $A\ne B$, $A\va B \ne \{\}$. Then by using Lemma \ref{Lemma: orthogonality of f_Bs}, we get $f\conj{f}{A\va B} = 0$ and therefore, ${f_A}{f_B} = 0$.

    Now, we will show that the set $\{f_B \;|\; B\subseteq\firstn{n}\}$ is a basis for $\dl{n}{0}$. First, we show linear independence. Let $d_B\in\R$ be such that $\sum_{B\subseteq\firstn{n}}{{d_B}{f_B}} = 0$. Multiplying both sides by $f_A$ we get
    $$\sum_{B\subseteq\firstn{n}}{{d_B}{f_B}{f_A}} = 0.$$
    
    Since ${f_A}{f_B} = \left\{
        \begin{aligned}
            &f_A,& \mbox{if $A=B$},\\
            &0,& \mbox{if $A\ne B$,}
        \end{aligned}\right.$ we get 
        $$\sum_{B\subseteq\firstn{n}}{{d_B}{f_B}{f_A}} = {d_A}{f_A} = 0.$$
        
    Since $f_A \ne 0$, $d_A = 0$. Since $A\subseteq\firstn{n}$ was arbitrary, we get $d_A = 0$ for all $A\subseteq\firstn{n}$. This shows linear independence.

    Lastly, we show that $\{f_B \;|\; B\subseteq\firstn{n}\}$ spans $\dl{n}{0}$. Let $U\in\dl{n}{0}$. Then using Lemma \ref{Theorem: real irred reps are coefficients} and result 3 of Lemma \ref{Lemma: properties of f_Bs}, we have that
    \begin{multline}\label{basis change formula for K_{n,0}}
        U = U\cdot1 = U\cdot\left(\sum_{B\subseteq\firstn{n}} f_B\right)\\ = \sum_{B\subseteq\firstn{n}}Uf_B =  \sum_{B\subseteq\firstn{n}}{({\zeta_B}(U)){f_B}}.
    \end{multline}
    
    Therefore, the set $\{f_B \;|\; B\subseteq\firstn{n}\}$ spans $\dl{n}{0}$.
\end{proof}

We need an enumeration of the set of subsets of $\firstn{n}$. We consider the enumeration given in the paper \cite{A.Acus}. We arrange the subsets of $\firstn{n}$ in lexicographic order and then enumerate them. Let us denote the index of a subset $B\subseteq\firstn{n}$ according to this enumeration by $i(B)$. We index the empty set $\{\}$ as the first set.

Let $b_k = (0,0,\hdots,0,1,0,\hdots,0) \in \R^{2^n}$ be the tuple having zeros in all positions except the $k$\textsuperscript{th} position as defined previously.
\begin{thm}\label{Theorem: isomorphism between K_{n,0} and direct sum of Rs}
    The identification $f_B \leftrightarrow b_{i(B)}$ is an isomorphism between $\dl{n}{0}$ and $\R^{2^n}$.
\end{thm}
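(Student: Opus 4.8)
The plan is to verify that the stated bijection on bases extends to an algebra isomorphism by checking that it respects both the multiplicative structure and the unit, and that it is a linear bijection. The heavy lifting has already been done: Theorem \ref{Theorem: f_Bs is orthonormal basis} tells us that $\{f_B \mid B\subseteq\firstn{n}\}$ is a basis for $\dl{n}{0}$ consisting of $2^n$ elements satisfying ${f_A}{f_B} = \delta_{i(A),i(B)}\,f_A$, and equation \eqref{EQn: multiplication in direct sums of reals} tells us that the standard basis $\{b_1,\dots,b_{2^n}\}$ of $\R^{2^n}$ satisfies $b_k b_l = \delta_{kl} b_k$. Since $i$ is by construction a bijection from the subsets of $\firstn{n}$ to $\firstn{2^n}$, the linear map $\varphi:\dl{n}{0}\to\R^{2^n}$ determined by $\varphi(f_B) = b_{i(B)}$ is a well-defined linear isomorphism of vector spaces.

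First I would check multiplicativity on basis elements: for $A,B\subseteq\firstn{n}$,
\begin{equation*}
\varphi(f_A f_B) = \varphi(\delta_{i(A),i(B)} f_A) = \delta_{i(A),i(B)} b_{i(A)} = b_{i(A)} b_{i(B)} = \varphi(f_A)\varphi(f_B),
\end{equation*}
where the middle equality uses \eqref{EQn: multiplication in direct sums of reals} together with the fact that $i(A) = i(B) \iff A = B$. Bilinearity of both products then extends this identity from basis elements to all of $\dl{n}{0}$, so $\varphi$ is an algebra homomorphism. Next I would verify that $\varphi$ sends the unit of $\dl{n}{0}$ to the unit of $\R^{2^n}$: by result 3 of Lemma \ref{Lemma: properties of f_Bs}, $1 = \sum_{B\subseteq\firstn{n}} f_B$, so $\varphi(1) = \sum_{B\subseteq\firstn{n}} b_{i(B)} = (1,1,\dots,1)$, which is precisely the multiplicative identity of $\R^{2^n}$. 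Combining these observations, $\varphi$ is a unital algebra isomorphism.

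There is no real obstacle here — every ingredient is in place — so the only thing to be careful about is bookkeeping: one should confirm that the enumeration $i$ is genuinely a bijection onto $\firstn{2^n}$ (which it is, since we are lexicographically ordering all $2^n$ subsets of $\firstn{n}$ and labelling them $1$ through $2^n$), and that the $f_B$ are genuinely nonzero and distinct (already established in the proof of Theorem \ref{Theorem: f_Bs is orthonormal basis}). The mild subtlety worth a sentence in the writeup is that an isomorphism of algebras in the ``with unity'' sense requires preservation of the identity, which is why invoking result 3 of Lemma \ref{Lemma: properties of f_Bs} is essential rather than optional.
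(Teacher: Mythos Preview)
Your proposal is correct and follows essentially the same approach as the paper: the paper's proof simply says the result is a direct consequence of Theorem~\ref{Theorem: f_Bs is orthonormal basis} and the discussion in Section~\ref{subsection: direct sums of Rs and Cs}, and you have spelled out exactly what that consequence looks like (linear bijection on bases, multiplicativity via the orthogonal-idempotent relations, unit preservation via result~3 of Lemma~\ref{Lemma: properties of f_Bs}). If anything, your writeup is more complete than the paper's one-line proof.
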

\begin{proof}
    The proof is a direct consequence of Theorem \ref{Theorem: f_Bs is orthonormal basis} and the discussion given in Section \ref{subsection: direct sums of Rs and Cs}.
\end{proof}

\subsection{Special basis for $\dl{0}{n}$}\label{subsection: special basis in K_{0,n}}
\subsubsection{Motivation}
It turns out that $\C\otimes\C\cong \C\oplus\C$. The isomorphism is given by ${z_1}\otimes{z_2} \mapsto ({z_1}{z_2},\;{z_1}\overline{z_2})$.\footnote{The author first came across this in this Physics forums post: \href{https://www.physicsforums.com/threads/mathbb-c-oplus-mathbb-c-cong-mathbb-c-otimes-mathbb-c.1055636}{https://www.physicsforums.com/threads/mathbb-c-oplus-mathbb-c-cong-mathbb-c-otimes-mathbb-c.1055636}.} Since $\dl{0}{1} \cong \C$, this means that $\dl{0}{2}\cong\C\oplus\C$. Writing the isomorphism explicitly, we get
\begin{align*}
    1\otimes1 &\leftrightarrow 1 \mapsto (1,1),\\
    {e_1}\otimes1 &\leftrightarrow e_1 \mapsto (i,i),\\
    1\otimes{e_1} &\leftrightarrow e_2 \mapsto (i,-i),\\
    {e_1}\otimes{e_1} &\leftrightarrow e_{12} \mapsto (-1,1).
\end{align*}

Equivalently, we can write the above isomorphism as\footnote{Here 'e' means even and 'o' means odd. Specifically 'e' is used in the superscript of $f$ when $f$ has elements of even grade and 'o' is used in the superscript of $f$ when $f$ has elements of odd grade. We use this notation because it inspires a generalisation to $\dl{0}{n}$ which we present in the Section \ref{subsubsection: special basis in K_{0,n}}.}
\begin{align*}
    f_1^e &= \frac{1}{2}\left(1 - e_{12}\right) \mapsto (1,0),\\
    f_1^o &= \frac{1}{2}\left({e_1}+{e_2}\right) \mapsto (i,0),\\
    f_2^e &= \frac{1}{2}\left({e_1}-{e_2}\right) \mapsto (0,i),\\
    f_2^o &= \frac{1}{2}\left(1 + e_{12}\right) \mapsto (0,1).
\end{align*}

Now, since $\dl{0}{n} \cong \underbrace{\dl{0}{1} \otimes \dl{0}{1} \otimes\cdots \otimes \dl{0}{1}}_{n \; \text{times}}$, using the distributivity of tensor product over direct sums and $\C\otimes\C$ being isomorphic to $\C\oplus\C$, it follows that in general,
$$\dl{0}{n} \cong \underbrace{\C\oplus\C\oplus\cdots\oplus\C}_{2^{n-1} \text{ times}} = \C^{2^{n-1}}.$$

We will give the isomorphism between $\dl{0}{n}$ and $\C^{2^{n-1}}$ next. But before that, note that if we define $f := \frac{1}{2}\left(1 + {e_1} + {e_2} + e_{12}\right), f^e = \proj{f}{0} - \proj{f}{2} = \frac{1}{2}\left(1-e_{12}\right)$ and $f^o = \proj{f}{1} = \frac{1}{2}\left(e_1 + e_2\right) \in \dl{0}{2}$, then $f_1^e = \conj{(f^e)}{\{\}}, f_1^o = \conj{(f^o)}{\{\}}, f_2^e = \conj{(f^e)}{2}, f_2^o = \conj{(f^o)}{2}$.
We will see that a generalization of this will result in an isomorphism between $\dl{0}{n}$ and $\C^{2^{n-1}}$.

\subsubsection{The basis}\label{subsubsection: special basis in K_{0,n}}
Let $f \in \dl{0}{n}$ be the sum of all basis elements in $\dl{0}{n}$ modulo a constant  i.e.,
$$f := \frac{1}{2^{n-1}}\left(\sum_{A\subseteq\firstn{n}}{e_A}\right).$$
The pre-factor $\frac{1}{2^{n-1}}$ can be thought of as a normalisation constant.
Define\footnotemark 
\begin{align*}
    E &:= \,\sum_{k=0}^{\lfloor\frac{n}{2}\rfloor}(-1)^{k}\;\proj{f}{2k} = \proj{f}{0}-\proj{f}{2}+\proj{f}{4}-\cdots,\\
    O &:= \;\sum_{k=0}^{\lfloor\frac{n}{2}\rfloor}(-1)^{k}\;\proj{f}{2k+1} = \proj{f}{1}-\proj{f}{3}+\proj{f}{5}-\cdots.
\end{align*}
\footnotetext{The element $E$ is the element $\Gamma_n = \gamma_2\gamma_3\cdots\gamma_n$ presented in \cite{MulticomplexIdeals} but in general our approach to idempotents differs from the approach presented in \cite{MulticomplexIdeals}.}

For $B\subseteq\{2,3,\hdots,n\}$, define $O_B := \conj{O}{B}, E_B := \conj{E}{B}$. We give some interesting results which follow from $O$ and $E$ being a very symmetric combination of generators $e_1,e_2,\hdots,e_n$.
\begin{rem}\label{remark about convention of E_Bs}
    The reason why we have defined $E_B$ for $B\subseteq\{2,3,\hdots,n\}$ and not for $B\subseteq\firstn{n}$ is because for all $A\subseteq\firstn{n}$, we have $E_{A} = E_{A^\mathsf{c}}$ where $A^\mathsf{c} = \firstn{n} \setminus A$ is the complement of the set $A$. This follows because $E_{\firstn{n}} = E$ holds since $E$ consists of basis elements of even grades and negating all generators in an even grade element will not change it. Then for all $A\subseteq\firstn{n}$, $\firstn{n} = A \sqcup A^{\mathsf{c}}$ and thus $E = E_{\firstn{n}} = E_{A \sqcup A^{\mathsf{c}}} = \conj{E}{A) (A^{\mathsf{c}}}$ holds and applying operation of conjugation $\conj{(.)}{A}$ on both sides gives us $E_A = E_{A^\mathsf{c}}$.
    
    Therefore it is redundant to consider all of the elements $\{E_A \;|\; A\subseteq\firstn{n}\}$ because this set contains duplicates -- half of the elements are the same.
    
    What one can do is to remove one element from $\firstn{n}$, say $k$. Now a subset $A$ of $\firstn{n}$ either contains $k$ or it does not contain $k$. Further, if $k\in A$ then $k \not\in A^\mathsf{c}$ and if $k \in A^\mathsf{c}$ then $k \not\in A$. Therefore, one can divide the set of all subsets of $\firstn{n}$ into two classes: collection of subsets that contain $k$ and the collection of subsets that do not contain $k$. Since $E_A = E_{A^\mathsf{c}}$, one can then only consider $E_A$ where $A$ lies in any of the two classes. In this paper, we choose $k=1$ and consider the subsets that do not contain $1$ i.e., the subsets of the set $\{2,3,\hdots,n\}$. One could choose any arbitrary element of $\firstn{n}$ but we in this paper decided to choose $k=1$ and therefore all the results presented henceforth are for the elements $\{E_B\;|\;B\subseteq\{2,3,\hdots,n\}\}$.
\end{rem}
\begin{lem}\label{Lemma: first properties of special basis in K(0,n)}
    For all $B\subseteq\{2,3,\hdots,n\}$, we have\\
    
    1. $O_B^2 = - E_B, E_B^2 = E_B$ and ${E_B}{O_B} = O_B$.\\
    
    2. For all $k\in\{2,3,\hdots,n\}$,
    \begin{align*}
        {e_k}{O_B} = \left\{
        \begin{aligned}
            &E_B,& \mbox{if $k\in B$,}\\
            &-E_B,& \mbox{if $k\notin B,$}
        \end{aligned}
        \right.\quad
        {e_k}{E_B} = \left\{
        \begin{aligned}
            &-O_B,& \mbox{if $k\in B$,}\\
            &O_B,& \mbox{if $k\notin B.$}
        \end{aligned}
        \right.
    \end{align*}
    
    3. $\sum_{B\subseteq\{2,3,\hdots,n\}} E_B = 1$.
\end{lem}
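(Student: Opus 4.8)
The plan is to first establish everything for $B = \{\}$ (so that $O_{\{\}} = O$ and $E_{\{\}} = E$), and then transport all identities to arbitrary $B \subseteq \{2,3,\ldots,n\}$ by applying the operation of conjugation $\conj{(.)}{B}$, exactly as was done in the proofs of Lemma \ref{Lemma: properties of f_Bs} and Lemma \ref{Lemma: orthogonality of f_Bs}. For this transport step to work I need the key fact that operations of conjugation distribute over multiplication (equation \eqref{properties of operations of conjugations 4}) and are linear \eqref{properties of operations of conjugations 2}; linearity is what lets me move $\conj{(.)}{B}$ past the grade projections appearing in the definitions of $E$ and $O$, so that $\conj{E}{B}$ and $\conj{O}{B}$ are again built from the same alternating-sign combinations of projections of $\conj{f}{B}$.

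For the base case I would start from the relation ${e_k}f = f$ for every $k \in \firstn{n}$, which is precisely what was proved in Proof of 2 of Lemma \ref{Lemma: properties of f_Bs} (that argument only used that $f$ is the normalized sum of all basis elements, and works verbatim here despite the different normalization constant). Splitting $f = E' + O'$ into its even-grade and odd-grade parts (note $E$ and $O$ carry extra alternating signs, so I must be careful: $E = \proj{f}{0} - \proj{f}{2} + \cdots$, not the full even part), multiplying by $e_k$ swaps even and odd grades and shifts the sign pattern; the upshot I expect is ${e_k}O = E$ and ${e_k}E = -O$ for $k \in \{2,3,\ldots,n\}$ — this requires checking how multiplication by a single generator $e_k$ (with $e_k^2 = -1$) moves a grade-$j$ term into grades $j-1$ and $j+1$ and tracking the signs $(-1)^{\lfloor j/2 \rfloor}$; this bookkeeping is the one genuinely computational point. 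Once ${e_k}O = E$ and ${e_k}E = -O$ are in hand, part 2 for $B = \{\}$ is immediate, and part 2 for general $B$ follows by applying $\conj{(.)}{B}$ and using $\conj{e_k}{B} = \pm e_k$ according as $k \in B$ or $k \notin B$, just as in Proof of 2 of Lemma \ref{Lemma: properties of f_Bs}.

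For part 1, I would derive $E^2 = E$ by induction on $n$ in the same style as Proof of 1 of Lemma \ref{Lemma: properties of f_Bs} — writing $f = \tfrac12(x + e_n x)$ with $x$ the analogous symmetric element on $\{1,\ldots,n-1\}$, extracting the even/alternating part, and reducing to the $(n-1)$ case — or, more cleanly, by using part 2: since $E$ is a sum of products of an even number of generators $e_k$ acting on $O$ (or via $E = \proj{f}{0} + (\text{terms } e_A \text{ with } |A| \text{ even})$ and repeatedly applying ${e_k}E = \pm O$, ${e_k}O = \pm E$), one computes $E \cdot E$ by expanding $E$ as a real linear combination of the $e_A$ and using ${e_A}E = (-1)^{?}(\cdots)$; I expect the slickest route is to first prove ${e_A}E \in \{E, O, -E, -O\}$ for every $A$ and then read off $E^2 = E$, $O^2 = -E$, $EO = O$ directly. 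Then part 3, $\sum_{B \subseteq \{2,\ldots,n\}} E_B = 1$: since $E_A = E_{A^{\mathsf c}}$ (Remark \ref{remark about convention of E_Bs}), the sum over $B \subseteq \{2,\ldots,n\}$ equals $\tfrac12 \sum_{A \subseteq \firstn{n}} E_A = \tfrac12 \sum_{A \subseteq \firstn{n}} \conj{E}{A}$; and in this full sum over all $A$, every basis element $e_C$ with $|C| > 0$ appears with coefficient summing to zero (equal numbers of $+$ and $-$ signs, as in Proof of 3 of Lemma \ref{Lemma: properties of f_Bs}), leaving only the grade-$0$ contribution $\proj{E}{0} = \proj{f}{0} = \tfrac{1}{2^{n-1}}$, times $2^n$ subsets, times $\tfrac12$, giving $1$. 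The main obstacle is purely the sign-tracking in the base case — getting the $(-1)^{\lfloor j/2 \rfloor}$ pattern to interact correctly with multiplication by $e_k$ — and I would handle it by checking $n=2$ against the explicit formulas in the Motivation subsection and then doing the induction step carefully.
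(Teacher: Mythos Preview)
There is a genuine gap at the very first step of your argument for part~2. You claim that ${e_k}f = f$ holds in $\dl{0}{n}$ ``verbatim'' by the proof of Lemma~\ref{Lemma: properties of f_Bs}.2, but that proof uses $e_k^2 = +1$, which is false here: in $\dl{0}{n}$ one has $e_k^2 = -1$, and already for $n=1$ we get $e_1(1+e_1) = e_1 - 1 \neq 1+e_1$. So the relation $e_kf = f$ simply does not hold in $\dl{0}{n}$, and your plan to split it by grade parity collapses. A symptom of this is that your ``expected upshot'' has the wrong signs: you write $e_kO = E$ and $e_kE = -O$, but for $B = \{\}$ the statement requires $e_kE = O$ and $e_kO = -E$ (check $n=2$: $e_1\cdot\tfrac12(1-e_{12}) = \tfrac12(e_1+e_2) = O$).

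What the paper does instead, and what you are missing, is the recursive decomposition
\[
E = \tfrac12\bigl(E' - e_kO'\bigr), \qquad O = \tfrac12\bigl(O' + e_kE'\bigr),
\]
where $E',O'$ are the analogous elements built from the generators with index $\neq k$. These formulas are the actual replacement for ``$e_kf=f$'' in the $\dl{0}{n}$ setting, and once written down they make part~2 a two-line computation (e.g.\ $e_kE = \tfrac12(e_kE' + O') = O$) and part~1 a clean induction on $n$ with base case $n=2$. Your sketch for part~1 (``induction in the style of Lemma~\ref{Lemma: properties of f_Bs}.1, or derive from part~2'') would work, but only after you have these recursive formulas; the direct grade-and-sign bookkeeping you propose is doable but considerably messier than the paper's route. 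Your argument for part~3 via $E_A = E_{A^{\mathsf c}}$ and summing over all $A \subseteq \firstn{n}$ is correct and essentially the paper's argument with an extra factor of $2$ inserted and removed.
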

\begin{proof}[Proof of 1]
        We will first show that $O^2 = -E, E^2 = E$ and $EO=O$. We will do this by using mathematical induction on $n$. Consider the case when $n=2$. Then $f=\frac{1}{2}\left(1+e_1+e_2+e_{12}\right), O=\frac{1}{2}\left(e_1+e_2\right), E=\frac{1}{2}\left(1-e_{12}\right)$. It is easily checked that $O^2 = -E, E^2 = E$ and $EO=O$. This is our base case.

        Our induction hypothesis is that the result holds for $n=m-1$. Assuming this, we will prove that the result holds for $n=m$. Let 
        \begin{align*}
            f^\prime &= \frac{1}{2^{m-2}}\left(\sum_{A\subseteq\{1,2,\hdots,m-1\}}\right),\\
            E^\prime &= \,\sum_{k=0}^{\lfloor\frac{m-1}{2}\rfloor}(-1)^{k}\;\proj{f^\prime}{2k} = \proj{f^\prime}{0}-\proj{f^\prime}{2}+\proj{f^\prime}{4}-\cdots,\\
            O^\prime &= \;\sum_{k=0}^{\lfloor\frac{m-1}{2}\rfloor}(-1)^{k}\;\proj{f^\prime}{2k+1} = \proj{f^\prime}{1}-\proj{f^\prime}{3}+\proj{f^\prime}{5}-\cdots.
        \end{align*}
        
        Note that then 
        \begin{align*}
            f &= \frac{1}{2}\left(f^\prime + {e_m}f^\prime\right),\hspace{10pt} E = \frac{1}{2}\left(E^\prime - {e_m}O^\prime\right),\hspace{10pt} O = \frac{1}{2}\left(O^\prime + {e_m}E^\prime\right).
        \end{align*}
        
        Now,
        \begin{align*}
            E^2 &= \frac{1}{2}\left(E^\prime - {e_m}O^\prime\right)\frac{1}{2}\left(E^\prime - {e_m}O^\prime\right) = \frac{1}{4}(E^\prime - {e_m}O^\prime)(E^\prime - {e_m}O^\prime)\\
            &= \frac{1}{4}\left((E^\prime)^2 - {e_m}{E^\prime}{O^\prime} - {e_m}{O^\prime}{E^\prime} - (O^\prime)^2\right).
        \end{align*}
        
        Since by our induction hypothesis, we have that $(E^\prime)^2 = {E^\prime}, (O^\prime)^2 = -{E^\prime}$ and ${E^\prime}{O^\prime}={O^\prime}$, we get
        \begin{align*}
            E^2 &= \frac{1}{4}(2{E^\prime} -2{e_m}{O^\prime}) = \frac{1}{2}\left(E^\prime - {e_m}O^\prime\right) = E.
        \end{align*}
        
        Similarly, 
        \begin{align*}
            O^2 &= \frac{1}{2}\left(O^\prime + {e_m}E^\prime\right)\frac{1}{2}\left(O^\prime + {e_m}E^\prime\right) = \frac{1}{4}(O^\prime + {e_m}E^\prime)(O^\prime + {e_m}E^\prime)\\
            &= \frac{1}{4}((O^\prime)^2 + {e_m}{E^\prime}{O^\prime} + {e_m}{O^\prime}{E^\prime} - (E^\prime)^2).
        \end{align*}
        
        Again, using our induction hypothesis, we have that $(E^\prime)^2 = {E^\prime}, (O^\prime)^2 = -{E^\prime}$ and ${E^\prime}{O^\prime}={O^\prime}$, and we get
        \begin{align*}
            O^2 &= \frac{1}{4}(-2{E^\prime} + 2{e_m}{O^\prime}) = \frac{1}{2}(-{E^\prime} + {e_m}{O^\prime}) = -\frac{1}{2}({E^\prime} - {e_m}{O^\prime}) = -E.
        \end{align*}
        
        Similarly, once again,
        \begin{align*}
            EO &= \frac{1}{2}\left(E^\prime - {e_m}O^\prime\right)\frac{1}{2}\left(O^\prime + {e_m}E^\prime\right) = \frac{1}{4}(E^\prime - {e_m}O^\prime)(O^\prime + {e_m}E^\prime)\\
            &= \frac{1}{4}({E^\prime}{O^\prime} + {e_m}(E^\prime)^2 -{e_m}(O^\prime)^2 + {O^\prime}{E^\prime}).
        \end{align*}
        
        And using our induction hypothesis for the last time, we have that $(E^\prime)^2 = {E^\prime}, (O^\prime)^2 = -{E^\prime}$ and ${E^\prime}{O^\prime}={O^\prime}$, and we get
        \begin{align*}
            EO &= \frac{1}{4}(2{O^\prime} + 2{e_m}{E^\prime}) = \frac{1}{2}({O^\prime} + {e_m}{E^\prime}) = O.
        \end{align*}
        
        Once we have that $O^2 = -E, E^2 = E$ and $EO=O$, the result follows if one applies operation of conjugation $\conj{(.)}{B}$ for $B\subseteq\{2,3,\hdots,n\}$ on both sides of each one of these equations.
\end{proof}

\begin{proof}[Proof of 2]
    Fix $k\in\firstn{n}$. We will first show that ${e_k}E = O$ and ${e_k}O = -E$. Like in the proof above, let 
        \begin{align*}
            f^\prime &= \frac{1}{2^{n-2}}\left(\sum_{A\subseteq\firstn{n}\setminus\{k\}}\right),\\
            E^\prime &= \,\sum_{k=0}^{\lfloor\frac{n-1}{2}\rfloor}(-1)^{k}\;\proj{f^\prime}{2k} = \proj{f^\prime}{0}-\proj{f^\prime}{2}+\proj{f^\prime}{4}-\cdots,\\
            O^\prime &= \;\sum_{k=0}^{\lfloor\frac{n-1}{2}\rfloor}(-1)^{k}\;\proj{f^\prime}{2k+1} = \proj{f^\prime}{1}-\proj{f^\prime}{3}+\proj{f^\prime}{5}-\cdots.
        \end{align*}
        Note that then 
        \begin{align*}
            f &= \frac{1}{2}\left(f^\prime + {e_k}f^\prime\right),\hspace{10pt} E = \frac{1}{2}\left(E^\prime - {e_k}O^\prime\right),\hspace{10pt} O = \frac{1}{2}\left(O^\prime + {e_k}E^\prime\right).
        \end{align*}
        Now, 
        \begin{align*}
            {e_k}E &= {e_k}\frac{1}{2}\left(E^\prime - {e_k}O^\prime\right) = \frac{1}{2}({e_k}{E^\prime} + {O^\prime}) = \frac{1}{2}({O^\prime} + {e_k}{E^\prime}) = O.
        \end{align*}
        And 
        \begin{align*}
            {e_k}O &= {e_k}\frac{1}{2}\left(O^\prime + {e_k}E^\prime\right) = \frac{1}{2}({e_k}{O^\prime} - {E^\prime}) = -\frac{1}{2}({E^\prime} - {e_k}{O^\prime}) = -E.
        \end{align*}
        Now, since $\conj{e_k}{B} = \left\{
        \begin{aligned}
            &e_k,& \mbox{if $k \notin B$};\\
            &-e_k,& \mbox{if $k \in B$,}
        \end{aligned}\right.$  the result follows.
\end{proof}
\begin{proof}[Proof of 3]
    Since $E_B = \conj{E}{B}$, when we add all $E_B$s together, all the terms in which a generator appears cancel out because in the sum, there are an equal number of terms involving $e_A$s with a plus sign and terms involving $e_A$s with a minus sign. Therefore, in the sum $\sum_{B\subseteq\firstn{n}} E_B$, only the grade $0$ terms survive and their sum is equal to 1 because each $E_B$ contributes $\frac{1}{2^{n-1}}$ and there are $2^{n-1}$ $E_B$s. 
\end{proof}
\begin{lem}\label{Lemma: orthogonality in K(0,n)}
    Let $n\ge2$. Let $B$ be a non-empty subset of $\{2,3,\hdots,n\}$, then $OO_B = 0, OE_B = 0, EO_B = 0$ and $EE_B = 0$.
\end{lem}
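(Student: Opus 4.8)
The plan is to derive all four vanishing identities simultaneously from Lemma~\ref{Lemma: first properties of special basis in K(0,n)}, using one observation: in $\dl{0}{n}$ every generator squares to $-1$, so a product such as $EE_B$ can be evaluated two ways after multiplying through by $e_j^2$, and playing off two well-chosen values of $j$ forces the product to vanish.

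First I would record the needed input. From the proof of Lemma~\ref{Lemma: first properties of special basis in K(0,n)} we have $e_jE=O$ and $e_jO=-E$ for \emph{every} $j\in\firstn{n}$. Fix a non-empty $B\subseteq\{2,3,\hdots,n\}$ and apply the operation of conjugation $\conj{(.)}{B}$: it is linear and distributes over products by \eqref{properties of operations of conjugations 4}, and $\conj{e_j}{B}=e_j$ if $j\notin B$ while $\conj{e_j}{B}=-e_j$ if $j\in B$. Hence, writing $\varepsilon_j=1$ for $j\notin B$ and $\varepsilon_j=-1$ for $j\in B$,
\begin{equation*}
    e_jE_B=\varepsilon_j\,O_B,\qquad e_jO_B=-\varepsilon_j\,E_B\qquad(j\in\firstn{n}),
\end{equation*}
the original identities being the case $B=\{\}$.

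Next, for $X\in\{E_B,O_B\}$ and any $j$ I compute $e_j^2(E\,X)$ twice: it equals $-E\,X$ since $e_j^2=-1$, and it equals $(e_jE)(e_jX)=O\,(e_jX)$ by commutativity. Taking $X=E_B$ gives $-EE_B=\varepsilon_j\,OO_B$; taking $X=O_B$ gives $-EO_B=-\varepsilon_j\,OE_B$. Now evaluate these with $j=1$, which is legitimate because $1\notin B$, so $\varepsilon_1=1$: this yields $-EE_B=OO_B$ and $EO_B=OE_B$. Then evaluate them with $j=k$ for some $k\in B$, which exists since $B\neq\{\}$, so $\varepsilon_k=-1$: this yields $-EE_B=-OO_B$ and $EO_B=-OE_B$. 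Comparing the two pairs forces $EE_B=OO_B=0$ and $EO_B=OE_B=0$, which is precisely the assertion of the lemma.

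There is essentially no obstacle; the only point to check is that both indices used in the last step are genuinely available --- an index outside $B$ always is, since $1\notin B\subseteq\{2,\hdots,n\}$ (here $n\ge2$ merely ensures the hypothesis is not vacuous), and an index inside $B$ is available by non-emptiness of $B$. If one prefers to match the inductive flavour of the neighbouring proofs, the same conclusion follows by induction on $n$ using the splitting of $E$ and $O$ along a generator $e_k$ with $k\in B$, exactly as in the proof of Lemma~\ref{Lemma: first properties of special basis in K(0,n)}; and, conceptually, passing to $\dl{0}{n}\otimes_\R\C$ one sees that $E+iO$ is a scalar multiple of a product of the commuting idempotents $\frac{1}{2}(1+ie_j)$, so that orthogonality drops out of $\frac{1}{2}(1+ie_j)\cdot\frac{1}{2}(1-ie_j)=0$. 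The two-evaluations argument is the most economical of these.
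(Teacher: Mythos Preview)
Your argument is correct. The key step---computing $e_j^2(EX)$ two ways to obtain $-EE_B=\varepsilon_j\,OO_B$ and $-EO_B=-\varepsilon_j\,OE_B$, then comparing the values at an index $j=1\notin B$ (giving $\varepsilon_1=1$) and at some $j=k\in B$ (giving $\varepsilon_k=-1$)---forces all four products to vanish exactly as you claim. The only delicate point is that you need $e_1E=O$ and $e_1O=-E$, and although the \emph{statement} of Lemma~\ref{Lemma: first properties of special basis in K(0,n)} part~2 is formulated for $k\in\{2,\hdots,n\}$, the proof there explicitly establishes these identities for every $k\in\firstn{n}$; your careful citation of ``the proof of Lemma~\ref{Lemma: first properties of special basis in K(0,n)}'' is therefore appropriate.

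This is a genuinely different route from the paper's. The paper argues by induction on $n$: after verifying the base case $n=2$ by hand, it writes $B=B'\sqcup\{k\}$, uses the splitting $E=\tfrac12(E'-e_kO')$, $O=\tfrac12(O'+e_kE')$ along the generator $e_k$, and then reduces $\conj{O}{k}\conj{O}{B'}$ (and the analogous products) to the inductive hypothesis on $E',O'$. Your argument is induction-free and considerably shorter: it leverages only the relations $e_jE=O$, $e_jO=-E$ already established in the previous lemma, together with the sign flip under conjugation. What you lose is the explicit recursive structure tying this lemma to the ``onion'' picture of $\dl{0}{n}$ that the paper develops elsewhere; what you gain is a one-paragraph proof that makes transparent \emph{why} the vanishing occurs---namely, that the action of $e_j$ on $E,O$ is independent of $j$ while on $E_B,O_B$ it is not, once $B$ is non-empty and omits $1$. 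The complexification remark at the end (viewing $E+iO$ as a product of commuting idempotents $\tfrac12(1+ie_j)$) is a nice conceptual gloss, though not needed for the proof.
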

\begin{proof}
    We will prove this using mathematical induction on $n$. Consider the case $n=2$. Then $O = \frac{1}{2}(e_1 + e_2)$, $E = \frac{1}{2}(1 - e_{12})$. The only non-empty subset that we have to consider is $\{2\}$. Then
    \begin{align*}
        O_{\{2\}} = \conj{O}{2} = \frac{1}{2}(e_1 - e_2),\hspace{10pt} E_{\{2\}} = \conj{E}{2} = \frac{1}{2}(1 + e_{12}).
    \end{align*}
    
    Now, $OO_{\{2\}} = \frac{1}{4}(e_1 + e_2)(e_1 - e_2) = \frac{1}{4}(1-{e_1}{e_2}+{e_2}{e_1}-1)=0$, $EO_{\{2\}} = \frac{1}{4}(1 - e_{12})(e_1 - e_2) = \frac{1}{4}(e_1 + e_2 - e_2 - e_1) = 0$, $OE_{\{2\}} = \frac{1}{4}(e_1 + e_2)(1 + e_{12}) = \frac{1}{4}(e_1 - e_2 + e_2 -e_1) = 0$ and $EE_{\{2\}} = \frac{1}{4}(1 - e_{12})(1 + e_{12}) = \frac{1}{4}(1 + e_{12} - e_{12} -1 = 0)$. This is our base case.

    Our induction hypothesis is that the result holds for $n=m$. Assuming this, we will prove that the result holds for $n=m+1$. Let $B\subseteq\firstn{m+1}$ such that $|B|>0$. Then we can express $B$ as 
    $$B = B^\prime\sqcup\{k\}$$
    where $k\in B$ and $B^\prime = B\setminus\{k\}$. Let 
        \begin{align*}
            f^\prime &= \frac{1}{2^{m}}\left(\sum_{A\subseteq\firstn{n}\setminus\{k\}}\right),\\
            E^\prime &= \,\sum_{k=0}^{\lfloor\frac{n-1}{2}\rfloor}(-1)^{k}\;\proj{f^\prime}{2k} = \proj{f^\prime}{0}-\proj{f^\prime}{2}+\proj{f^\prime}{4}-\cdots,\\
            O^\prime &= \;\sum_{k=0}^{\lfloor\frac{n-1}{2}\rfloor}(-1)^{k}\;\proj{f^\prime}{2k+1} = \proj{f^\prime}{1}-\proj{f^\prime}{3}+\proj{f^\prime}{5}-\cdots.
        \end{align*}
        
        Then 
        \begin{align*}
            f = \frac{1}{2}\left(f^\prime + {e_k}f^\prime\right),\hspace{10pt}
            E = \frac{1}{2}\left(E^\prime - {e_k}O^\prime\right),\hspace{10pt}
            O = \frac{1}{2}\left(O^\prime + {e_k}E^\prime\right).
        \end{align*}
        
        First, consider $\conj{O}{k}\conj{O}{B^\prime}$:
        \begin{align*}
            \conj{O}{k}\conj{O}{B^\prime} &= \frac{1}{2}\left(O^\prime - {e_k}E^\prime\right)\conj{\left(\frac{1}{2}\left(O^\prime + {e_k}E^\prime\right)\right)}{B^\prime}\\
            &= \frac{1}{4}(O^\prime - {e_k}E^\prime)\left(\conj{(O^\prime)}{B^\prime} + {e_k}\conj{(E^\prime)}{B^\prime}\right)\\
            &= \frac{1}{4}\left({O^\prime}\conj{(O^\prime)}{B^\prime} + {e_k}{O^\prime}\conj{(E^\prime)}{B^\prime} -{e_k}{E^\prime}\conj{(O^\prime)}{B^\prime} +{E^\prime}\conj{(E^\prime)}{B^\prime}\right).
        \end{align*}
        
        By our induction hypothesis, all of the terms ${O^\prime}\conj{(O^\prime)}{B^\prime}$, ${O^\prime}\conj{(E^\prime)}{B^\prime}$, ${E^\prime}\conj{(O^\prime)}{B^\prime}$ and ${E^\prime}\conj{(E^\prime)}{B^\prime}$ are zero and therefore $\conj{O}{k}\conj{O}{B^\prime} = 0$. Note that $OO_B = O\conj{O}{B} = \conj{(\conj{O}{k}\conj{O}{B^\prime})}{k}$. Since $\conj{O}{k}\conj{O}{B^\prime} = 0$, we have that $OO_B = 0$.
        
        Similarly, using the same idea, we also get that $OE_B = 0, EO_B = 0$ and $EE_B = 0$.
\end{proof}

Let $U\in\dl{0}{n}$. Then for each $A \subseteq\firstn{n}$, define linear map $\gamma_A:\dl{0}{n}\to\C$ by $\gamma_A(e_C) := (-1)^{|A\cap C|}i^{|C|}$, i.e., we replace generators in $e_C$ by $-i$ if their index is present in $A$ and $+i$ if their index is absent in $A$. We have defined $\gamma_A$ on the basis elements and one then extends it to all of $\dl{0}{n}$ via linearity. Define $\zeta_A^R,\zeta_A^I:\dl{0}{n}\to\R$ by $\zeta_A^R(U) := \Re(\gamma_A(U))$, $\zeta_A^I(U) := \Im(\gamma_A(U))$ where $\Re(z)$ denotes the real part and $\Im(z)$ denotes the imaginary part of a complex number $z$.

\begin{thm}\label{Theorem: real irred reps are the coefficients K(0,n)}

Let $U\in\dl{0}{n}$, $C\subseteq\{2,3,\hdots,n\}$. Then $UE_C = 
    {\zeta_C^R}(U)E_C + {\zeta_C^I}(U)O_C$.
\end{thm}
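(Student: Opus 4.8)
The plan is to follow the template of Lemma~\ref{Theorem: real irred reps are coefficients} (its $\dl{n}{0}$ analogue): first pin down how an arbitrary basis element $e_A$ acts on $E_C$ by left multiplication, and then extend to all of $\dl{0}{n}$ by linearity, using that $\zeta_C^R$ and $\zeta_C^I$ are $\R$-linear since they are the real and imaginary parts of the $\R$-linear map $\gamma_C$.

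First I would record the single-generator action. From the proof of Lemma~\ref{Lemma: first properties of special basis in K(0,n)} we have $e_k E = O$ and $e_k O = -E$ for \emph{every} $k\in\firstn{n}$ (that proof fixes an arbitrary $k\in\firstn{n}$). Applying the conjugation $\conj{(\cdot)}{C}$ to these two identities, using that conjugation distributes over products (iterating \eqref{properties of operations of conjugations 4}) and that $\conj{e_k}{C}=e_k$ when $k\notin C$ while $\conj{e_k}{C}=-e_k$ when $k\in C$, I obtain for every $k\in\firstn{n}$
\[
    e_k E_C = \varepsilon_k\, O_C, \qquad e_k O_C = -\varepsilon_k\, E_C,
\]
where $\varepsilon_k = -1$ if $k\in C$ and $\varepsilon_k = +1$ otherwise (in particular $\varepsilon_1=+1$, so the index $1$ needs no separate treatment). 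Combined with part~1 of Lemma~\ref{Lemma: first properties of special basis in K(0,n)} ($E_C^2=E_C$, $O_C^2=-E_C$, $E_C O_C=O_C$), this says that on the real span $\mathrm{Span}_\R\{E_C,O_C\}$, identified with $\C$ via $E_C\leftrightarrow 1$, $O_C\leftrightarrow i$, left multiplication by $e_k$ is exactly multiplication by the complex scalar $\varepsilon_k i$.

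Next I would iterate over the factors of $e_A$. For $A=\{a_1<\cdots<a_m\}\subseteq\firstn{n}$, commutativity gives $e_A E_C = e_{a_1}\cdots e_{a_m}E_C$, and applying the single-generator rule $m$ times (a short induction on $|A|$ makes the bookkeeping precise) shows that, inside the copy of $\C$ above, $e_A E_C$ is multiplication of $E_C$ by $\prod_{j=1}^m \varepsilon_{a_j} i = (-1)^{|A\cap C|} i^{|A|} = \gamma_C(e_A)$. Writing $\gamma_C(e_A)=\zeta_C^R(e_A)+i\,\zeta_C^I(e_A)$, this reads $e_A E_C = \zeta_C^R(e_A)\,E_C + \zeta_C^I(e_A)\,O_C$. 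Finally, for $U=\sum_{A\subseteq\firstn{n}} a_A e_A$, linearity of multiplication and of $\zeta_C^R,\zeta_C^I$ gives
\[
    U E_C = \sum_A a_A\, e_A E_C = \Big(\sum_A a_A \zeta_C^R(e_A)\Big)E_C + \Big(\sum_A a_A \zeta_C^I(e_A)\Big)O_C = \zeta_C^R(U)\,E_C + \zeta_C^I(U)\,O_C,
\]
as claimed.

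The only place requiring a little care is the iteration step: one must track which of $E_C,O_C$ one currently sits on and the accumulated power of $i$ and sign. This is precisely what the identification $E_C\leftrightarrow 1$, $O_C\leftrightarrow i$ absorbs, since the product of the scalars $\varepsilon_{a_j} i$ collapses to $(-1)^{|A\cap C|} i^{|A|}$; so I do not anticipate any genuine obstacle here, only routine indexing.
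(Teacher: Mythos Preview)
Your proof is correct and follows essentially the same approach as the paper: compute $e_A E_C$ by iterating the single-generator action from (the proof of) Lemma~\ref{Lemma: first properties of special basis in K(0,n)} and then extend by linearity. Your identification of $\mathrm{Span}_\R\{E_C,O_C\}$ with $\C$ via $E_C\leftrightarrow 1$, $O_C\leftrightarrow i$ packages the sign-and-parity bookkeeping more cleanly than the paper, which instead splits $e_A=e_{A\setminus C}\,e_{A\cap C}$ and runs through an explicit four-case analysis on the parities of $|A\cap C|$ and $|A\setminus C|$; the underlying argument is the same.
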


\begin{proof} 
    Let $B\subseteq\{2,3,\hdots,n\}$. Recall from Lemma \ref{Lemma: first properties of special basis in K(0,n)} that for all $k\in\firstn{n}$,
    \begin{eqnarray*}
        {e_k}{E_B} = 
        \begin{cases}
            -O_B,& \mbox{if $k\in B$,}\\
            O_B,& \mbox{if $k\notin B$,}
        \end{cases} \qquad 
        {e_k}{O_B} =
        \begin{cases}
            E_B,& \mbox{if $k\in B$,}\\
            -E_B,& \mbox{if $k\notin B$.}
        \end{cases}
    \end{eqnarray*}
    A first consequence of this is that if $D\not\subseteq C$, then    
    \begin{eqnarray*}
    {e_D}{O_C} = 
    \begin{cases}
        (-1)^\frac{|D|+1}{2}{E_C},& \mbox{if $|D|$ is odd,}\\
        (-1)^\frac{|D|}{2}{O_C},& \mbox{if $|D|$ is even,}
    \end{cases}
    {e_D}{E_C} =
    \begin{cases}
        (-1)^\frac{|D|-1}{2}{O_C},& \mbox{if $|D|$ is odd,}\\
        (-1)^\frac{|D|}{2}{E_C},& \mbox{if $|D|$ is even.}
    \end{cases}
    \end{eqnarray*}
        And if $D\subseteq C$, then
    \begin{eqnarray*}
    {e_D}{O_C} = 
    \begin{cases}
        (-1)^\frac{|D|-1}{2}{E_C},& \mbox{if $|D|$ is odd,}\\
        (-1)^\frac{|D|}{2}{O_C},& \mbox{if $|D|$ is even,}
    \end{cases}
    {e_D}{E_C} =
    \begin{cases}
        (-1)^\frac{|D|+1}{2}{O_C},& \mbox{if $|D|$ is odd,}\\
        (-1)^\frac{|D|}{2}{E_C},& \mbox{if $|D|$ is even.}
    \end{cases}
    \end{eqnarray*}
     Let $A = \{a_1,a_2,\hdots,a_m\} \subseteq\firstn{n}$. We use the above formulas iteratively use this to first find ${e_A}{E_B}$. First, note that $e_A = {e_{A\setminus C}} \,{e_{A\cap C}}$. Now, ${e_{A\cap C}}{E_C} = \left\{
        \begin{aligned}
            &(-1)^\frac{|A\cap C|+1}{2}{O_C},& \mbox{if $|A\cap C|$ is odd,}\\
            &(-1)^\frac{|A\cap C|}{2}{E_C},& \mbox{if $|A\cap C|$ is even}
        \end{aligned}
        \right.$ and
        
        \begin{align*}
            &{e_A}{E_C} = {e_{A\setminus C}}{e_{A\cap C}}= \left\{
            \begin{aligned}
                &(-1)^\frac{|A\cap C|+1}{2}{e_{A\setminus C}}{O_C},& \mbox{if $|A\cap C|$ is odd,}\\
                &(-1)^\frac{|A\cap C|}{2}{e_{A\setminus C}}{E_C},& \mbox{if $|A\cap C|$ is even}
            \end{aligned}
            \right.\\
            &= \left\{
            \begin{aligned}
                &{(-1)^\frac{|A\cap C|+1}{2}}{(-1)^\frac{|A\setminus C|+1}{2}}{E_C},& \mbox{if $|A\cap C|$ is odd and $|A\setminus C|$ is odd,}\\
                &{(-1)^\frac{|A\cap C|+1}{2}}{(-1)^\frac{|A\setminus C|}{2}}{O_C},& \mbox{if $|A\cap C|$ is odd and $|A\setminus C|$ is even,}\\
                &{(-1)^\frac{|A\cap C|}{2}}{(-1)^\frac{|A\setminus C|-1}{2}}{O_C},& \mbox{if $|A\cap C|$ is even and $|A\setminus C|$ is odd,}\\
                &{(-1)^\frac{|A\cap C|}{2}}{(-1)^\frac{|A\setminus C|}{2}}{E_C},& \mbox{if $|A\cap C|$ is even and $|A\setminus C|$ is even}
            \end{aligned}
            \right.\\
            &= \left\{
            \begin{aligned}
                &(-1)^{{\frac{|A|}{2}}+1}{E_C},& \mbox{if $|A\cap C|$ is odd and $|A\setminus C|$ is odd,}\\
                &{(-1)^\frac{|A|+1}{2}}{O_C},& \mbox{if $|A\cap C|$ is odd and $|A\setminus C|$ is even,}\\
                &{(-1)^\frac{|A|-1}{2}}{O_C},& \mbox{if $|A\cap C|$ is even and $|A\setminus C|$ is odd,}\\
                &{(-1)^\frac{|A|}{2}}{E_C},& \mbox{if $|A\cap C|$ is even and $|A\setminus C|$ is even.}
            \end{aligned}
            \right.
        \end{align*}
    This means that if $|A|$ is even, then ${e_A}{E_C} = \pm{E_C}$ and if $|A|$ is odd, then ${e_A}{E_C} = \pm{O_C}$. Note that we get the signs given above for the basis elements $e_A$ if we replace the generators in $e_A$ common to $e_C$ with $-i$ and replace remaining generators in $e_A$ by $i$ and think of $E_C$ as $1$ and $O_C$ as $i$. Therefore, if we take a general element $U = \sum_{A\subseteq\firstn{n}}{{a_A}{e_A}} \in \dl{0}{n}$, then $UE_C = {{\zeta_C}^R}(U)E_C + {{\zeta_C}^I}(U)O_C$. This completes the proof.
\end{proof}

\begin{thm}\label{Theorem: f_B^1s, f_B^is is an orthonormal basis}
    Let $A,B \subseteq\{2,3,\hdots,n\}$, then\\
    
    ${E_A}{E_B} = \left\{
    \begin{aligned}
        &E_A,& \mbox{if $A=B$,}\\
        &0,& \mbox{if $A\ne B$,}
    \end{aligned}\right.$\quad
    ${E_A}{O_B} = \left\{
    \begin{aligned}
        &O_A,& \mbox{if $A=B$,}\\
        &0,& \mbox{if $A\ne B$,}
    \end{aligned}\right.$\\
    
    ${O_A}{O_B} = \left\{
    \begin{aligned}
        &-E_A,& \mbox{if $A=B$,}\\
        &0,& \mbox{if $A\ne B$.}
    \end{aligned}\right.$\\
    
    Further, $\{E_B, O_B \;|\; B\subseteq\{2,3,\hdots,n\}\}$ is a basis for $\dl{0}{n}$.
\end{thm}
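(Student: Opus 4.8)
The plan is to follow the template of the proof of Theorem~\ref{Theorem: f_Bs is orthonormal basis}, with $f$ replaced by the pair $E, O$ and Lemma~\ref{Lemma: orthogonality of f_Bs} replaced by Lemma~\ref{Lemma: orthogonality in K(0,n)}.

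For the multiplication relations, the diagonal case $A = B$ is immediate: it is exactly part~1 of Lemma~\ref{Lemma: first properties of special basis in K(0,n)}. For $A \ne B$, I would set $C := A \va B$ and exploit that the operations of conjugation are commuting involutions that distribute over multiplication (equations~\eqref{properties of operations of conjugations 1}, \eqref{properties of operations of conjugations 3}, \eqref{properties of operations of conjugations 4}). Since $\conj{(\conj{X}{C})}{A} = \conj{X}{C \va A} = \conj{X}{B}$ for any $X \in \dl{0}{n}$, one gets
\begin{equation*}
E_A E_B = \conj{E}{A}\,\conj{E}{B} = \conj{\bigl(E\,\conj{E}{C}\bigr)}{A} = \conj{\bigl(E\,E_C\bigr)}{A},
\end{equation*}
and in the same way $E_A O_B = \conj{(E\,O_C)}{A}$ and $O_A O_B = \conj{(O\,O_C)}{A}$. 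Because $A, B \subseteq \{2,3,\hdots,n\}$ and $A \ne B$, the set $C = A \va B$ is a \emph{non-empty} subset of $\{2,3,\hdots,n\}$, so Lemma~\ref{Lemma: orthogonality in K(0,n)} applies and gives $E\,E_C = E\,O_C = O\,O_C = 0$; applying $\conj{(.)}{A}$ (which fixes $0$) then yields $E_A E_B = E_A O_B = O_A O_B = 0$. The remaining product $O_A E_B$ is not listed in the statement but follows from these by commutativity of $\dl{0}{n}$.

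For the basis claim I would first establish spanning: by part~3 of Lemma~\ref{Lemma: first properties of special basis in K(0,n)} we have $\sum_{B} E_B = 1$, so for any $U \in \dl{0}{n}$ Theorem~\ref{Theorem: real irred reps are the coefficients K(0,n)} gives
\begin{equation*}
U = U\cdot 1 = \sum_{B \subseteq \{2,3,\hdots,n\}} U E_B = \sum_{B \subseteq \{2,3,\hdots,n\}} \bigl(\zeta_B^R(U)\,E_B + \zeta_B^I(U)\,O_B\bigr),
\end{equation*}
an $\R$-linear combination of the $E_B$ and $O_B$. This list has $2\cdot 2^{n-1} = 2^n = \dim_\R \dl{0}{n}$ entries, and a spanning list of cardinality equal to the dimension is automatically a basis; alternatively I would verify linear independence directly by multiplying a vanishing combination $\sum_B (c_B E_B + d_B O_B) = 0$ by $E_A$, which — using the multiplication table just proved together with commutativity — collapses it to $c_A E_A + d_A O_A = 0$, then multiplying by $O_A$ as well to obtain $(c_A^2 + d_A^2)E_A = 0$, and finally observing that $E_A \ne 0$ since $\proj{E_A}{0} = \proj{E}{0} = \frac{1}{2^{n-1}} \ne 0$.

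I do not expect a genuine obstacle: this is the $\dl{0}{n}$ counterpart of Theorem~\ref{Theorem: f_Bs is orthonormal basis}, the only new feature being that each index set now indexes two special elements instead of one. The point that requires care is the symmetric-difference bookkeeping for the conjugations — in particular checking that $C = A \va B$ stays inside $\{2,3,\hdots,n\}$ (which it does, as $1 \notin A \cup B$), so that Lemma~\ref{Lemma: orthogonality in K(0,n)}, which is stated only for subsets of $\{2,3,\hdots,n\}$, is legitimately available. For $n = 1$ the off-diagonal case is vacuous and $\{E, O\} = \{1, e_1\}$ is plainly a basis of $\dl{0}{1} \cong \C$.
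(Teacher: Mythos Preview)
Your proposal is correct and follows essentially the same route as the paper: the multiplication table is deduced from Lemma~\ref{Lemma: first properties of special basis in K(0,n)} for the diagonal and from Lemma~\ref{Lemma: orthogonality in K(0,n)} via the symmetric-difference conjugation trick for the off-diagonal (the paper merely says ``direct consequence'' here, while you spell out the $C = A \va B$ bookkeeping), and spanning is obtained exactly as in the paper from part~3 of Lemma~\ref{Lemma: first properties of special basis in K(0,n)} together with Theorem~\ref{Theorem: real irred reps are the coefficients K(0,n)}. The only substantive difference is in the linear-independence step: after reducing to $c_A E_A + d_A O_A = 0$, the paper observes that $E_A$ consists of even-grade terms and $O_A$ of odd-grade terms, so the two coefficients vanish separately; you instead either invoke the dimension count (which is perfectly sufficient and arguably cleaner) or manipulate algebraically to reach $(c_A^2 + d_A^2)E_A = 0$. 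Note that multiplying $c_A E_A + d_A O_A = 0$ by $O_A$ alone gives $c_A O_A - d_A E_A = 0$, not the displayed conclusion --- you need to combine the two relations (or, more directly, multiply by $c_A E_A - d_A O_A$) to reach $(c_A^2 + d_A^2)E_A = 0$.
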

\begin{proof}
    Note that the first part is a direct consequence of Lemma \ref{Lemma: first properties of special basis in K(0,n)} and \ref{Lemma: orthogonality in K(0,n)}. 
    
    Now, we show that the set $\{E_B, O_B \;|\; B\subseteq\{2,3,\hdots,n\}\}$ is a basis for $\dl{0}{n}$. First we will show linear independence. Let $d_B^O$ and $d_B^E \in\R$ be such that $\sum_{B\subseteq\{2,3,\hdots,n\}}{{d_B^O}{O_B} + {d_B^E}{E_B}} = 0$. Multiplying both sides of the above equation by $E_A$, we get ${d_A^O}{O_A} + {d_A^E}{E_A} = 0$. Because $O_A$ and $E_A$ comprise of elements of different grades, we get ${d_A^O} = {d_A^E} = 0$. Since $A\subseteq\{2,3,\hdots,n\}$ was arbitrary, we get $d_A = 0$ for all $A\subseteq\{2,3,\hdots,n\}$. This shows linear independence.

    Lastly, we show that $\{E_B, O_B \;|\; B\subseteq\{2,3,\hdots,n\}\}$ spans $\dl{0}{n}$. Let $U\in\dl{0}{n}$. Then, using Theorem \ref{Theorem: real irred reps are the coefficients K(0,n)} and result 3 of Lemma \ref{Lemma: first properties of special basis in K(0,n)}, we have that 
    \begin{multline}\label{basis change formula for K_{0,n}}
        U = U\cdot1 = U\cdot\left(\sum_{B\subseteq\{2,3,\hdots,n\}}E_B\right) = \sum_{B\subseteq\{2,3,\hdots,n\}} UE_B\\
        = \sum_{B\subseteq\{2,3,\hdots,n\}}{{{\zeta_B}^R}(U)E_B + {{\zeta_B}^I}(U)O_B}.
    \end{multline}
    
    This shows that the set $\{E_B, O_B \;|\; B\subseteq\{2,3,\hdots,n\}\}$ spans $\dl{0}{n}$.
\end{proof}

We need an enumeration of the set of subsets of $\{2,3,\hdots,n\}$. We consider the enumeration given in the paper \cite{A.Acus} like we did previously for $\dl{n}{0}$. We arrange the subsets of $\{2,3,\hdots,n\}$ in lexicographic order and then enumerate them. Let us denote the index of a subset $B\subseteq\{2,3,\hdots,n\}$ according to this enumeration by $i(B)$. We index the empty set $\{\}$ as the first set. Let $b_k^1 = (0,0,\hdots,0,1,0,\hdots,0)$, $b_k^i=(0,0,\hdots,0,i,0,\hdots,0) \in \C^{2^{n-1}}$ be as before.

\begin{thm}\label{Theorem: isomorphism between K_{0,n} and direct sum of Cs}
    The identification $E_B \leftrightarrow b_{i(B)}^1, O_B \leftrightarrow b_{i(B)}^i$ is an isomorphism between $\dl{0}{n}$ and $\C^{2^{n-1}}$.
\end{thm}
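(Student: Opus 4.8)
The plan is to mirror the proof of Theorem~\ref{Theorem: isomorphism between K_{n,0} and direct sum of Rs}: the statement should fall out of Theorem~\ref{Theorem: f_B^1s, f_B^is is an orthonormal basis} together with the description of the multiplication of basis elements of $\C^m$ recorded in Section~\ref{subsection: direct sums of Rs and Cs}, in particular \eqref{EQn: multiplication in direct sums of complex numbers}.

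First I would define the $\R$-linear map $\varphi\colon\dl{0}{n}\to\C^{2^{n-1}}$ on the basis $\{E_B,O_B \mid B\subseteq\{2,3,\hdots,n\}\}$ furnished by Theorem~\ref{Theorem: f_B^1s, f_B^is is an orthonormal basis}, by $\varphi(E_B)=b^1_{i(B)}$ and $\varphi(O_B)=b^i_{i(B)}$. Since there are $2^{n-1}$ subsets of $\{2,3,\hdots,n\}$, the domain and codomain both have real dimension $2^n$, and since $i(\cdot)$ is a bijection onto $\firstn{2^{n-1}}$ the map $\varphi$ sends a basis to a basis bijectively, so it is an $\R$-linear isomorphism; it only remains to check that it respects the product and the unit.

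Next I would verify multiplicativity on basis elements, which suffices by bilinearity. Comparing the three cases of Theorem~\ref{Theorem: f_B^1s, f_B^is is an orthonormal basis} --- $E_AE_B=\delta_{AB}E_A$, $E_AO_B=\delta_{AB}O_A$, $O_AO_B=-\delta_{AB}E_A$ --- with the multiplication table \eqref{EQn: multiplication in direct sums of complex numbers} of $\C^{2^{n-1}}$ (noting $b^1_kb^i_l=\delta_{kl}b^i_k$, which is immediate from $b^i_k=i\,b^1_k$ and $b^1_kb^1_l=\delta_{kl}b^1_k$), one sees $\varphi(XY)=\varphi(X)\varphi(Y)$ for all $X,Y$ among the $E_B,O_B$. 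For the unit, result~3 of Lemma~\ref{Lemma: first properties of special basis in K(0,n)} gives $1=\sum_B E_B$, hence $\varphi(1)=\sum_B b^1_{i(B)}=(1,1,\hdots,1)$, which is precisely the identity of $\C^{2^{n-1}}$. Therefore $\varphi$ is an algebra isomorphism.

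I do not expect a genuine obstacle here; the only points needing care are bookkeeping ones --- confirming $i(\cdot)$ is a bijection from the subsets of $\{2,3,\hdots,n\}$ onto $\firstn{2^{n-1}}$ so that $\varphi$ is well defined and bijective, and making explicit the cross-products $b^1_kb^i_l=\delta_{kl}b^i_k$ of $\C^{2^{n-1}}$ that are not literally displayed in \eqref{EQn: multiplication in direct sums of complex numbers} but follow at once from its definition. Alternatively, one can phrase the argument without ever naming $\varphi$, exactly as in the proof of Theorem~\ref{Theorem: isomorphism between K_{n,0} and direct sum of Rs}, by simply observing that $\{E_B,O_B\}$ is a basis whose multiplication table matches that of the standard basis of $\C^{2^{n-1}}$, which by the remark closing Section~\ref{subsection: direct sums of Rs and Cs} forces $\dl{0}{n}\cong\C^{2^{n-1}}$.
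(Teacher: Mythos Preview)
Your proposal is correct and follows essentially the same approach as the paper: the paper's proof is the one-line observation that the result is a direct consequence of Theorem~\ref{Theorem: f_B^1s, f_B^is is an orthonormal basis} together with the discussion in Section~\ref{subsection: direct sums of Rs and Cs}, which is precisely what you invoke (and then spell out in more detail than the paper does).
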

\begin{proof}
    The proof is a direct consequence of Theorem \ref{Theorem: f_B^1s, f_B^is is an orthonormal basis} and the discussion given in Section \ref{subsection: direct sums of Rs and Cs}.
\end{proof} 
\begin{rem}
    As mentioned in Remark \ref{remark about convention of E_Bs}, we could have considered the basis $\{E_B, O_B \;|\; B\subseteq\firstn{n}\setminus\{k\}\}$ for any fixed $k\in\firstn{n}$. In that case, Theorem \ref{Theorem: isomorphism between K_{0,n} and direct sum of Cs} holds true but we will have $B\subseteq\firstn{n}\setminus\{k\}$ instead of $B\subseteq\{2,3,\hdots,n\}$. For $B\subseteq\firstn{n}\setminus\{k\}$, $UE_B$ would still equal $\zeta_B^R(U)E_B + \zeta_B^I(U)O_B$ but the coordinates of image of $U\in\dl{0}{n}$ under the isomorphism would be different than the coordinates we got for $k=1$.
\end{rem}

\section{Discussion regarding multicomplex spaces}\label{discussion}
In this paper, we introduced commutative analogues of Clifford algebras $\dl{p}{q}$. We showed that the multicomplex numbers are a special case of these algebras, being isomorphic to $\dl{0}{n}$. As shown in this paper, $\dl{p}{q}$ has a tensor product decomposition and a direct sum decomposition and therefore multicomplex spaces can also be decomposed as a tensor product of copies of complex numbers and as a direct sum of copies of complex numbers. The latter fact is well known in the multicomplex spaces literature and goes by the name of idempotent representation of multicomplex spaces. Price, in his book \cite{Price}, presents an elegant iterative way of finding idempotents in multicomplex spaces. Also check \cite{InvolutionsMC} for the discussion about idempotent decomposition (this paper also introduced involutions in multicomplex spaces which can be though of as a generalisation of operations of conjugation). This approach of finding idempotents was continued in \cite{MulticomplexIdeals} where the authors also introduced operations of conjugation which allows them to express the idempotents as conjugates of one idempotent like we have done. We were not aware of the literature on multicomplex spaces when we started studying the algebras $\dl{p}{q}$ and present, in this paper, our approach to find idempotents. We found out explicitly an isomorphism between multicomplex space $\dl{0}{n}$ and $\C^{2^{n-1}}$ and in order to do that we constructed idempotents $E_B$ for $B\subseteq\{2,3,\hdots,n\}$. Check Section \ref{subsection: special basis in K_{0,n}} for details. Working with the direct sum decomposition of multicomplex spaces is more convenient and we hope that the explicit isomorphism would enable one to go back and forth between the usual definition of multicomplex spaces and their direct sum decomposition allowing them to prove results in a simpler manner. For example, it had been conjectured in \cite{FrenchMSThesis} that there are exactly $2^{2^{n-1}}$ idempotents in $\dl{0}{n}$ which is evident from the fact that $\dl{0}{n}\cong\C^{2^{n-1}}$ and one can find them explicitly using the isomorphism presented in Theorem \ref{Theorem: isomorphism between K_{0,n} and direct sum of Cs}. Corollary 2.3 in \cite{InvolutionsMC} also becomes apparent if one makes note of the direct sum decomposition in Theorem \ref{Theorem: isomorphism between K_{0,n} and direct sum of Cs}.

Something which is evident but not talked about is that multicomplex spaces $\dl{0}{n}$ have a tensor product decomposition as described in this paper. This decomposition encapsulates the fact that one can represent elements in multicomplex space $\dl{0}{n}$ as elements of $\dl{0}{k}$ but instead of having real coefficients, their coefficients are elements of $\dl{0}{n-k}$ for $k\in\firstn{n}$ as the property \eqref{Tensor decomposition Property 1}. One can also use the tensor product decomposition to their advantage when considering the regular representation of $\dl{0}{n}$ which Price calls Cauchy--Riemann matrices. This is because the regular representation of tensor product of algebras is the tensor product representation of the regular representation of individual algebras. Therefore, the Cauchy--Riemann matrices are Kronecker products of matrices representing complex numbers. This greatly simplifies the proving many results which were previously proven using row reduction methods in Price's book \cite{Price}.

\section{Conclusions}\label{section_conclusions}
In this paper we define and discuss commutative analogues of Clifford algebras $\dl{p}{q}$. It is to be noted that commutativity made life simpler by giving a tensor product decomposition \eqref{tensor product decomposition result} for the algebras. We showed that $\dl{p}{q}$ are either isomorphic to $\dl{n}{0}$ or isomorphic to $\dl{0}{n}$ -- the multicomplex numbers. Next, we introduced operations of conjugations in $\dl{p}{q}$, which are essentially a generalisation of complex conjugation in $\C$. Lastly, we presented a special basis in $\dl{p}{q}$ which allowed them to be expressed as a direct sum of $\R$s or $\C$s depending on which isomorphism class the algebra is in. We summarise the main results below.
\begin{enumerate}
    \item Tensor product decomposition (Theorem \ref{tensor product decomposition theorem}):
        \begin{equation*}
            \dl{p}{q} \cong \underbrace{\dl{1}{0} \otimes \dl{1}{0} \cdots \otimes \dl{1}{0}}_{p \; \text{times}} \otimes \underbrace{\dl{0}{1} \otimes \dl{0}{1} \cdots \otimes \dl{0}{1}}_{q \; \text{times}}.
        \end{equation*}
    This follows because one can understand tensor product as extension of scalars. More explicitly, the isomorphism is given by mapping $k$th generator of $\dl{p}{q}$ to the tensor product of $n$ 1s except the generator of a $\dl{1}{0}$ or $\dl{0}{1}$ at the $k$th place.
    \item Isomorphism classes in $\dl{p}{q}$ (Theorem \ref{thm: isomorphism classes}). Let $p,q\in\Z_{\ge0}$ and $n = p+q$.
        \begin{align*}
            \text{If } q\ge1, \text{ then } \dl{p}{q} \cong \dl{0}{n} \text{ and } \dl{0}{n} \not\cong \dl{n}{0}.
        \end{align*}
    \item Direct sum decomposition of $\dl{n}{0}$ (Theorem \ref{Theorem: isomorphism between K_{n,0} and direct sum of Rs}):
        \begin{equation*}
            \dl{n}{0} \cong \R^{2^n} = \underbrace{\R\oplus\R\oplus\cdots\oplus\R}_{2^n \,\, \text{times}}.
        \end{equation*}
    Let 
    \begin{equation*}
        f = \frac{1}{2^n}\left(\sum_{A\subseteq\firstn{n}}e_A\right) \in \dl{n}{0}.
    \end{equation*}
    Define $f_B := \conj{f}{B}$, where $\conj{(.)}{B}$ denotes the $B$th conjugate of an element (Section \ref{Operations of conjugation} discusses operations of conjugation). These elements are such that for all $A,B\subseteq\firstn{n}$,
    $${f_A}{f_B} = \left\{
        \begin{aligned}
            &f_A,& \mbox{if $A=B$},\\
            &0,& \mbox{if $A\ne B$.}
        \end{aligned}\right.$$
    Existence of such elements directly leads to the above isomorphism between $\dl{n}{0}$ and $\R^{2^n}$ where one maps these elements to the standard basis of $\R^{2^n}$. Refer to Section \ref{subsection: special basis for K(n,0)} for details.

    \item Direct sum decomposition of $\dl{0}{n}$ (Theorem \ref{Theorem: isomorphism between K_{0,n} and direct sum of Cs}):
    \begin{equation*}
            \dl{0}{n} \cong \C^{2^{n-1}} = \underbrace{\C\oplus\C\oplus\cdots\oplus\C}_{2^{n-1} \,\, \text{times}}.
        \end{equation*}
    Let 
    \begin{align*}
        f &:= \frac{1}{2^{n-1}}\left(\sum_{A\subseteq\firstn{n}}{e_A}\right)\in\dl{0}{n},\\
        E &:= \,\sum_{k=0}^{\lfloor\frac{n}{2}\rfloor}(-1)^{k}\;\proj{f}{2k} = \proj{f}{0}-\proj{f}{2}+\proj{f}{4}-\cdots,\\
        O &:= \;\sum_{k=0}^{\lfloor\frac{n}{2}\rfloor}(-1)^{k}\;\proj{f}{2k+1} = \proj{f}{1}-\proj{f}{3}+\proj{f}{5}-\cdots.
    \end{align*}
    Define $E_B = \conj{E}{B}$ and $O_B = \conj{O}{B}$ for all $B\subseteq\{2,3,\hdots.n\}$,  where $\conj{(.)}{B}$ denotes the $B$th conjugate of an element (Section \ref{Operations of conjugation} discusses operations of conjugation). These elements are such that for all $A,B \subseteq\{2,3,\hdots,n\}$,\\

    ${E_A}{E_B} = \left\{
    \begin{aligned}
        &E_A,& \mbox{if $A=B$,}\\
        &0,& \mbox{if $A\ne B$,}
    \end{aligned}\right.$\quad
    ${E_A}{O_B} = \left\{
    \begin{aligned}
        &O_A,& \mbox{if $A=B$,}\\
        &0,& \mbox{if $A\ne B$,}
    \end{aligned}\right.$ and\\
    
    ${O_A}{O_B} = \left\{
    \begin{aligned}
        &-E_A,& \mbox{if $A=B$,}\\
        &0,& \mbox{if $A\ne B$.}
    \end{aligned}\right.$
    
    \noindent Existence of such elements directly leads to the above isomorphism between $\dl{0}{n}$ and $\C^{2^{n-1}}$ where one maps these elements to the standard basis of $\C^{2^{n-1}}$, thinking of it as an $\R$-vector space. Refer to Section \ref{subsection: special basis in K_{0,n}} for details.
\end{enumerate}

We hope that the decompositions provided in this paper will aid further study of commutative analogues of Clifford algebras, in particular, multicomplex spaces and commutative quaternions. We were motivated to consider such algebras because the subalgebra ${\rm{Span}_\R}\{1,e_{1256},e_{1346},e_{2345}\} \subseteq \cl{6}{0}$ described in \cite{A.Acus} causes trouble with respect to finding a explicit formula for multiplicative inverse and is isomorphic to $\dl{2}{0}$. We hope to better understand the problem of finding multiplicative inverses in Clifford algebras next. In the next paper \cite{DetInCommAnalCliffAlg}, we solve the problem of finding multiplicative inverses in commutative analogues of Clifford algebras by introducing the notion of determinant in them,  which involve only operations of conjugation and do not involve matrix operations.

\section*{Acknowledgment}
The first author deeply thanks Prof. Dmitry Shirokov for giving him the opportunity to work with him. The first author also wishes to thank his friends, especially Chitvan Singh and Ipsa Bezbarua, who were always there for him.

The authors are grateful to the anonymous reviewers for their careful reading of the paper and helpful comments on how to improve the presentation.

The article was prepared within the framework of the project “Mirror Laboratories” HSE University “Quaternions, geometric algebras and applications”.
\medskip

\noindent{\bf Data availability} Data sharing not applicable to this article as no datasets were generated or analyzed during the current study.

\medskip

\noindent{\bf Declarations}\\
\noindent{\bf Conflict of interest} The authors declare that they have no conflict of interest.

\bibliographystyle{spmpsci}

\end{document}